\documentclass[american]{scrartcl}
\KOMAoptions{DIV=12, paper=a4, abstract=on}

\usepackage[utf8]{inputenc}
\usepackage[T1]{fontenc}
\usepackage{babel}
\usepackage{lmodern} 
\usepackage[babel=true]{microtype}

\usepackage{amsmath, amssymb, amsfonts, amsthm, booktabs, enumitem, subcaption}

\usepackage{siunitx}
\sisetup{%
   scientific-notation=false,%
   retain-zero-exponent=true,%
   tight-spacing=true,%
   group-minimum-digits = 4,%
   group-digits = true,%
}

\usepackage{graphicx, pgfplots}
\usetikzlibrary{calc, matrix}

\pgfplotsset{compat=newest,
   plot coordinates/math parser=false,
   scale only axis,
   xmajorgrids, xminorticks=false,
   ymajorgrids, yminorticks=false,
   every axis/.append style={%
      width=0.8\linewidth, height=0.45\linewidth,%
      font=\scriptsize, line width=0.75pt, mark = +,%
      scaled x ticks = false,
      legend cell align=left,%
   },
   legend style={at={(0.98,1.00)}, anchor=north east, align=left,
      fill=none,draw=none,row sep=-0.2em},
   every axis legend/.append style={font=\scriptsize},
}
\providecommand{\IhMark}{none}
\providecommand{\KminMark}{none}
\providecommand{\KminConvMark}{o}

\usepackage{hyperref}
\hypersetup{%
   pdfauthor = {W. Huang, L. Kamenski},
   pdftitle = {On the mesh nonsingularity of the moving mesh PDE method},
}
\usepackage[capitalize]{cleveref}
\crefformat{section}{#2Sect.~#1#3}
\Crefformat{section}{#2Section~#1#3}
\crefmultiformat{section}{Sects.~#2#1#3}%
   { and~#2#1#3}{, #2#1#3}{ and~#2#1#3}
\crefformat{equation}{(#2#1#3)}
\Crefformat{equation}{Equation~(#2#1#3)}
\crefmultiformat{equation}{(#2#1#3)}%
   { and~(#2#1#3)}{, (#2#1#3)}{ and~(#2#1#3)}
\crefrangeformat{equation}{(#3#1#4)--(#5#2#6)}

\providecommand{\Abs}[1]{\left\lvert#1\right\rvert}
\providecommand{\V}[1]{\boldsymbol{#1}}
\providecommand{\dx}{\,d\V{x}}
\providecommand{\dxi}{\,d\V{\xi}}
\providecommand{\p}[2]{\frac{\partial{}#1}{\partial{}#2}}
\providecommand{\Th}{\mathcal{T}_h}
\providecommand{\Tc}{\mathcal{T}_{c}}

\providecommand{\M}{\mathbb{M}}
\providecommand{\D}{\mathbb{D}}
\providecommand{\J}{\mathbb{J}}

\providecommand{\JMJ}{\J \M^{-1}\J^T}

\DeclareMathOperator{\tr}{tr}

\theoremstyle{plain}
\newtheorem{theorem}{\hspace{6mm}Theorem}[section]
\newtheorem{lemma}{\hspace{6mm}Lemma}[section]

\theoremstyle{definition}

\theoremstyle{remark}
\newtheorem{example}{\hspace{6mm}Example}[section]
\newtheorem{remark}{\hspace{6mm}Remark}[section]

\newcommand{\tcB}[1]{\textcolor{blue}{#1}}

\date{February 3, 2017}

\title{On the mesh nonsingularity\\
   of the moving mesh PDE method%
}

\author{%
   Weizhang Huang%
   \thanks{%
      The University of~Kansas, Department of~Mathematics, Lawrence, KS~66045, U.S.A.
      (\href{mailto:whuang@ku.edu}{\nolinkurl{whuang@ku.edu}}).%
      \newline{}
      Supported by the University of Kansas General Research Fund allocation~\#2301056.
   }%
   \and
   Lennard Kamenski%
   \thanks{%
      Weierstrass Institute for~Applied Analysis and~Stochastics, Berlin, Germany
      (\href{mailto:kamenski@wias-berlin.de}{\nolinkurl{kamenski@wias-berlin.de}}).%
   }%
}

\begin{document}
\maketitle

\begin{abstract}
The moving mesh PDE (MMPDE) method for variational mesh generation and adaptation
is studied theoretically at the discrete level, in particular the nonsingularity of the obtained meshes.
Meshing functionals are discretized geometrically and the MMPDE
is formulated as a modified gradient system of the corresponding discrete functionals
for the location of mesh vertices.
It is shown that if the meshing functional satisfies a coercivity condition,
then the mesh of the semi-discrete MMPDE is nonsingular for all time if it is nonsingular initially.
Moreover, the altitudes and volumes of its elements are bounded below by positive numbers depending
only on the number of elements, the metric tensor, and the initial mesh.
Furthermore, the value of the discrete meshing functional is convergent as time increases,
which can be used as a stopping criterion in computation.
Finally, the mesh trajectory has limiting meshes which are critical points of the discrete functional.
The convergence of the mesh trajectory can be guaranteed when a stronger condition
is placed on the meshing functional.
Two meshing functionals based on alignment and equidistribution are known to satisfy the coercivity condition.
The results also hold for fully discrete systems of the MMPDE provided that
the time step is sufficiently small and a numerical scheme preserving the property of monotonically decreasing
energy is used for the temporal discretization of the semi-discrete MMPDE.\@
Numerical examples are presented.
\\[2ex]
\noindent\textbf{AMS 2010 MSC.}
65N50, 65K10
\\[1ex]
\noindent\textbf{Key Words.}
Variational mesh generation, mesh adaptation, moving mesh PDE, mesh nonsingularity,
limiting mesh
\end{abstract}


\section{Introduction}
The variational method for mesh generation and adaptation has received considerable attention in the scientific computing community; e.g.,\ see~\cite{Car97,HuaRus11,KnuSte94,Lis99,ThoWarMas85} and references therein.
It generates an adaptive mesh as the image of a given reference mesh under a coordinate transformation determined by a meshing functional.
Such a functional is typically designed to measure difficulties in the numerical approximation of the physical solution and involve a user-prescribed metric tensor (monitor function) to control mesh adaptation.
This method has the advantage that it makes it easy to incorporate mesh requirements (e.g.,\ smoothness, adaptivity, or alignment) in the formulation of the functional~\cite{BraSal82}.
It serves as not only a standalone method for mesh generation and adaptation but also a smoothing device for automatic mesh generation (e.g.,\ see~\cite{DasKamSi16,FreOll97,HuaKamSi15}).
Moreover, the variational method is the base for a number of adaptive moving mesh methods~\cite{HuaRenRus94a,HuaRus99,HuaRus11,LiTanZha01}.

A number of variational methods have been developed so far.
For example, Winslow~\cite{Win81} proposes an equipotential method based on variable diffusion.
Brackbill and Saltzman~\cite{BraSal82} develop a method by combining mesh concentration,
smoothness, and orthogonality.
Dvinsky~\cite{Dvi91} uses the energy of harmonic mappings as his meshing functional.
Knupp~\cite{Knu96} and Knupp and Robidoux~\cite{KnuRob00} formulate functionals based
on the idea of conditioning the Jacobian matrix of the coordinate transformation. 
Huang~\cite{Hua01a} and Huang and Russell~\cite{HuaRus11} develop functionals based
on the so-called equidistribution and alignment conditions.

Compared with the algorithmic development, much less progress has been made
in theoretical studies.
The existence and uniqueness of the minimizer of Dvinsky's meshing functional is guaranteed
by the theory of harmonic mappings between multidimensional  domains~\cite{Dvi91}.
Winslow's functional~\cite{Win81} is uniformly convex and coercive so it has a unique minimizer~\cite[Example 6.2.1]{HuaRus11}.
Huang's functional~\cite{Hua01a} is coercive and polyconvex and has minimizers~\cite[Example 6.2.2]{HuaRus11} while that of Huang and Russell is coercive and polyconvex and has a nonsingular minimizer~\cite[Example 6.2.3]{HuaRus11}.
Note that the nonsingularity of the minimizer for the above mentioned functionals is unknown 
(except for that of Huang and Russell).
Moreover, these results are only at the continuous level.

At the discrete level, studies mainly remain in one spatial dimension.
Pryce~\cite{Pry89} proves the existence of the limiting mesh and the convergence of
de Boor's algorithm for solving the equidistribution principle when the metric tensor is approximated by a piecewise linear interpolant.
His result is generalized by Xu et al.~\cite{XuHuaRus11} to the situation where the metric tensor is approximated by a piecewise constant interpolant.
Gander and Haynes~\cite{GanHay12} and  Haynes and Kwok~\cite{HayKwo17} show the existence of the limiting mesh and the convergence of  the parallel and alternating Schwarz domain decomposition algorithms applied to the continuous and discrete equidistribution principle.

The objective of this paper is to present a theoretical study on variational mesh generation and
adaptation at the discrete level for any dimension. We consider a broad class
of meshing functionals and the MMPDE method for finding their minimizers. We employ
a geometric discretization recently introduced in~\cite{HuaKam15a} for meshing functionals.
The semi-discrete MMPDE (discrete in space and continuous in time) is defined as
a modified gradient system for the corresponding discrete functionals.
The mesh nonsingularity and the existence and uniqueness of limiting meshes for
both the semi-discrete and fully discrete MMPDEs are studied.
Largely thanks to the inherent properties of the new discretization,
it can be shown that if the meshing functional satisfies a coercivity condition,
the mesh of the semi-discrete MMPDE stays nonsingular for all time if it is nonsingular
initially. Moreover, the altitudes and volumes of its elements are bounded from
below by positive numbers depending only on the number of elements, the metric tensor, and the initial mesh.
Furthermore, the value of the discrete functional
is convergent as time increases, which can be used as a stopping criterion for the computation.
Finally, the mesh trajectory has limiting meshes which are critical points of the discrete functional.
The convergence of the mesh trajectory can be guaranteed when a stronger condition
is placed on the meshing functional (see the discussion following \cref{thm-lim-2}).
The functionals based on alignment and equidistribution~\cite{Hua01a,HuaRus11}
are known to satisfy the coercivity condition for a large range of parameters
(see \cref{huang} with $p > 1$).
The analysis also holds for a fully discrete system for the MMPDE provided that
the time step is sufficiently small and a numerical scheme preserving the property of monotonically decreasing
energy is used to integrate the semi-discrete MMPDE.\@
Euler, backward Euler, and algebraically stable Runge-Kutta schemes 
(including Gauss and Radau IIA) are known to preserve the property
under a time-step restriction that involves a local Lipschitz bound of the Hessian matrix
of the discrete functional (e.g.,~\cite{HaiLub14,StuHum96}).
 
An outline of this paper is given as follows. Meshing functionals
and the MMPDE method for finding minimizers are described in \cref{SEC:fun}.
The geometric discretization of the meshing functionals is given in \cref{SEC:dis}.
\Cref{SEC:analysis} is devoted to the analysis of the semi-discrete MMPDE and its discretization.
Numerical examples are given in \cref{SEC:numerics} to demonstrate the theoretical findings.
The conclusions are drawn in \cref{SEC:conclusion}.

\section{Meshing functionals and MMPDE}
\label{SEC:fun}

In this section we describe the general form of a functional and two specific examples
used for mesh generation and adaptation. We also discuss the concept of functional equivalence
and the MMPDE approach for finding a minimizer of the meshing functional.

Let $\Omega$ be a bounded, not necessarily convex, polygonal or polyhedral domain in $\mathbb{R}^d$, $d \ge 1$, and
$\M = \M(\V{x})$ a symmetric metric tensor defined on $\Omega$ which satisfies
\begin{equation}
\label{M-1}
\underline{m}\ I \le \M(\V{x}) \le \overline{m}\ I \quad \forall \V{x} \in \Omega,
\end{equation}
where the inequality sign is in the sense of positive definiteness and $\underline{m},\overline{m} >0$ are constants.
Our goal is to use the variational method to generate a simplicial mesh for $\Omega$ according to $\M$.

Let  $\Omega_c$ be a chosen computational domain, which can be a real domain in
$\mathbb{R}^d$ or a collection of simplexes (see the discussion in the next section).
With the variational method, an adaptive mesh is generated as the image
of a computational mesh on $\Omega_c$ under a coordinate transformation between $\Omega_c$ and $\Omega$
which in turn is determined as a minimizer of a meshing functional.

Denote the coordinate transformation by $\V{x} = \V{x}(\V{\xi}): \Omega_c \to \Omega$ and its inverse coordinate
transformation by $\V{\xi} = \V{\xi}(\V{x}): \Omega \to \Omega_c$. We consider meshing functionals in the form
\begin{equation}
I[\V{\xi}] = \int_\Omega G \left (\J, \det(\J), \M, \V{x} \right) \dx,
\label{fun-1}
\end{equation}
where $\J = \frac{\partial \V{\xi}}{\partial \V{x}}$ is the Jacobian matrix of $\V{\xi} = \V{\xi}(\V{x})$ and
$G$ is a given function. We assume that $G$ has continuous derivatives up to the third order 
with respect to all of its arguments, ${\|\J\|} < \infty$, $|\det(\J)| < \infty$, $\M$ is symmetric
and uniformly positive definite on $\Omega$, and $\V{x} \in \Omega$.
($\| \cdot \|$ denotes the matrix/vector 2-norm.)
This functional is minimized for the coordinate transformation subject to suitable boundary correspondence
between $\partial \Omega$ and $\partial \Omega_c$.
The form \cref{fun-1} is very general and includes many existing meshing functionals as special examples
(e.g.,~\cite{HuaRus11,KnuSte94,Lis99} and \cref{ex:Winslow,ex:Huang} below).

The functional \cref{fun-1} is formulated in terms of the inverse coordinate transformation $\V{\xi}(\V{x})$.
It can be transformed into a mathematically equivalent functional expressed in terms of
the coordinate transformation $\V{x}(\V{\xi})$.
To explain this, we consider a coordinate transformation
$(\V{x}, \V{\xi}) \to (\V{u},\V{v})$ defined by
\begin{equation}
\label{uv-1}
\V{x} = \Phi(\V{u}, \V{v}),\quad \V{\xi} = \Psi(\V{u}, \V{v}),\qquad
\det \left (\frac{\partial (\V{x}, \V{\xi})}{\partial (\V{u}, \V{v})} \right ) \neq 0,
\end{equation}
where $\V{u}$ and $\V{v}$ are the new independent and dependent variables, respectively.
The curve given by the equation $\V{\xi} = \V{\xi}(\V{x})$
in the $\V{x}$-$\V{\xi}$ space corresponds to the curve given by some equation
$\V{v} = \V{v}(\V{u})$ in the $\V{u}$-$\V{v}$ space. Making the change of variables \cref{uv-1},
we can transform the functional \cref{fun-1} into a new functional involving $\V{u}$ and $\V{v}$.
The invariance of the Euler-Lagrange equation in calculus of variations (e.g.,~Gelfand and Fomin~\cite{GelFom63})
states that if $\V{\xi} = \V{\xi}(\V{x})$ satisfies the Euler-Lagrange equation of \cref{fun-1},
then $\V{v} = \V{v}(\V{u})$ satisfies the Euler-Lagrange equation of the new functional.
Thus, the minimizers of \cref{fun-1} can be obtained through the minimizers of
the new functional, and vice versa. In this sense, we say \cref{fun-1} and the new functional
are mathematically equivalent.

Consider a special coordinate transformation
\begin{equation}
\label{uv-2}
\V{x} = \Phi(\V{u}, \V{v}) \equiv \V{v},\quad \V{\xi} = \Psi(\V{u}, \V{v}) \equiv \V{u} ,
\end{equation}
which represents an interchange of the roles of the independent and dependent variables.
Since the Jacobian matrix of \cref{uv-2} is
\[
\frac{\partial (\V{x}, \V{\xi})}{\partial (\V{u}, \V{v})}
= \begin{bmatrix} 0 & I \\ I & 0 \end{bmatrix} ,
\]
which is nonsingular, the invariance of the Euler-Lagrange equation implies that
the functional \cref{fun-1} is mathematically equivalent to
\begin{equation}
I[\V{x}] = \int_{\Omega_c} \frac{G \left (\J, \det(\J), \M, \V{x} \right)}{\det(\J)} \dxi ,
\label{fun-2}
\end{equation}
which is obtained by interchanging the roles of its independent
and dependent variables in \cref{fun-1}. Notice that the new functional is still denoted by
$I$ without causing confusion. Indeed, from the equivalence, we can consider \cref{fun-1}
as a functional for $\V{\xi} = \V{\xi}(\V{x})$ (\emph{the $\V{\xi}$-formulation}) or for 
$\V{x} = \V{x}(\V{\xi})$ through the interchanging the roles of the independent and dependent
variations, i.e., \cref{fun-2} (\emph{the $\V{x}$-formulation}).

In this work, we use the $\V{x}$-formulation. We employ the MMPDE method (a time-transient approach~\cite{HuaRenRus94,HuaRenRus94a}) to find a minimizer of the functional \cref{fun-2}.
The MMPDE is defined as a modified gradient flow of \cref{fun-2},
\begin{equation}
\label{mmpde-2}
\p{\V{x}}{t} = - \frac{P}{\tau} \frac{\delta I}{\delta \V{x}}, \quad t > 0, 
\end{equation}
where $\frac{\delta I}{\delta \V{x}}$ is the functional derivative of $I$ with respect to $\V{x}$,
$\tau > 0$ is a constant parameter used to adjust the time scale of the equation, and $P$ is a positive scalar
function used to make the equation to have some invariance properties (a choice of $P$ will be given later for \cref{ex:Winslow,ex:Huang}).
A discretization of \cref{mmpde-2} gives a system for the nodal velocities for the physical mesh.
The interested reader is referred to~\cite{HuaRus11} for detailed discussion on the discretization of MMPDEs.
In the next section, we consider a direct discretization method with which the functional \cref{fun-1}
(instead of MMPDEs) is discretized directly and the nodal velocity system is
then obtained as a modified gradient system of the discretized functional.


\begin{example}[The generalized Winslow functional]
\label{ex:Winslow}
The first example is a generalization of Winslow's variable diffusion functional~\cite{Win81},
\begin{equation}
   I[\V{\xi}] = \int_\Omega \tr(\JMJ) \dx,
   \label{winslow}
\end{equation}
where $\tr(\cdot)$ denotes the trace of a matrix and $\M^{-1}$ serves as the diffusion matrix.
This functional has been used by many researchers; e.g.,~\cite{BecMacRob01,HuaRus98,HuaRus99,LiTanZha01}.
It is coercive and convex (in terms of $\V{\xi}=\V{\xi}(\V{x})$)
and therefore has a unique minimizer~\cite[Example 6.2.1]{HuaRus11}.

For the discretization to be discussed in the next section, we need 
the derivatives of $G$ with respect to $\J$, $\det(\J)$, $\M$, and $\V{x}$. They are
\begin{equation}
   \begin{cases}
     G  = \tr(\JMJ),\\
      \frac{\partial G}{\partial \J}  =  2 \M^{-1} \J^T,\\
      \frac{\partial G}{\partial \det(\J)}  =  0 ,\\
      \frac{\partial G}{\partial \M}  =  - \M^{-1} \J^T \J \M^{-1},\\
      \frac{\partial G}{\partial \V{x}}  =   0.
   \end{cases}
   \label{winslow-2}
\end{equation}
Note that $\frac{\partial G}{\partial \J}$ and $\frac{\partial G}{\partial \M}$ are $d$-by-$d$ matrices and
$\frac{\partial G}{\partial \V{x}}$ is a row vector of $d$ components. They are expressed
in the notation of scalar-by-matrix differentiation. For example, $\frac{\partial G}{\partial \J}$
is a $d$-by-$d$ matrix defined as
\begin{equation}
   {\left (\frac{\partial G}{\partial \J}\right )}_{ij} = \frac{\partial G}{\partial \J_{ji}} .
\label{sbmd-1}
\end{equation}
The chain rule for scalar-by-matrix differentiation reads as%
\footnote{The interested reader is referred to~\cite{HuaKam15a} for a more detailed
discussion on scalar-by-matrix differentiation.}
\begin{equation}
\frac{\partial G}{\partial t} = \tr\left ( \frac{\partial G}{\partial \J} \frac{\partial \J}{\partial t}\right ).
\label{sbmd-2}
\end{equation}
Using the definition of $G$ and viewing $\J$ as a function of $t$, we have
\[
\frac{\partial G}{\partial t} = \tr\left ( \frac{\partial \JMJ}{\partial t}\right )
= \tr\left ( \frac{\partial \J}{\partial t}\M^{-1} \J^T + \J \M^{-1} \frac{\partial \J^T}{\partial t}\right )
= \tr\left ( 2 \M^{-1} \J^T \frac{\partial \J}{\partial t} \right ) .
\]
By comparing this with the chain rule, we get
\[
\frac{\partial G}{\partial \J}  =  2 \M^{-1} \J^T .
\]
The other derivatives are obtained similarly.

Regarding the choice of $P$, it is useful to make the MMPDE invariant under
the scaling transformation $\M \to c \cdot \M$ for a positive number $c$ since
the mesh concentration is controlled by the distribution of $\M$ instead of its absolute value.
A choice of $P$ for this purpose for the current functional is
\[
   P = {\det(\M)}^{\frac{1}{d}} .
\]
\qed{}
\end{example}

\begin{example}[Huang's functional]
\label{ex:Huang}
The second functional is
\begin{equation}
   I[\V{\xi}]
   = \theta \int_\Omega \sqrt{\det(\M)} {\left(\tr(\JMJ) \right)}^{\frac{dp}{2}} \dx
   + (1 - 2 \theta) d^{\frac{dp}{2}} \int_\Omega \sqrt{\det(\M)}
            {\left( \frac{\det(\J)}{\sqrt{\det(\M)}}\right)}^{p} \dx,
   \label{huang}
\end{equation}
where $0 \le \theta \le 1$ and $p > 0$ are dimensionless parameters.
This functional was proposed by Huang~\cite{Hua01a} based on the so-called alignment and equidistribution conditions.
Alignment and equidistribution are balanced by $\theta$,
with full alignment for $\theta = 1$ and full equidistribution for $\theta = 0$.
For $0 < \theta \le \frac{1}{2}$, $d p \ge 2$, and $p \ge 1$, the functional is coercive and polyconvex
(in terms of $\V{\xi}=\V{\xi}(\V{x})$) and has a minimizer~\cite[Example 6.2.2]{HuaRus11}.
Moreover, for $\theta = \frac{1}{2}$ it reduces to
\begin{equation}
\label{huang-2}
   I[\V{\xi}] = \frac{1}{2} \int_\Omega \sqrt{\det(\M)} {\left (\tr(\JMJ)\right )}^{\frac{dp}{2}} \dx,
\end{equation}
which is coercive and convex (in terms of $\V{\xi}=\V{\xi}(\V{x})$) and has a unique minimizer.
Particularly, \cref{huang-2} gives the energy functional for a harmonic mapping
from $\Omega$ to $\Omega_c$ when $dp/2 = 1$ (cf.~\cite{Dvi91}).
Moreover, \cref{huang-2} and Winslow's functional \cref{winslow} coinside when
$dp/2 = 1$ and $\M = I$.

The derivatives of $G$ are
\begin{equation}
   \begin{cases}
      G = \theta \sqrt{\det(\M)} {\left(\tr(\JMJ) \right)}^{\frac{dp}{2}}
      + (1 - 2 \theta) d^{\frac{dp}{2}} \sqrt{\det(\M)} {\left( \frac{\det(\J)}{\sqrt{\det(\M)}}\right)}^{p},\\
      \p{G}{\J} =  d p \theta \sqrt{\det(\M)}  {\left( \tr(\JMJ )\right )}^{\frac{d p}{2}-1} \M^{-1} \J^T,\\
      \p{G}{r} = p (1-2\theta) d^{\frac{d p}{2}} \det{(\M)}^{\frac{1-p}{2}} \det{(\J)}^{p-1},\\
      \p{G}{\M} = - \frac{\theta d p}{2} \sqrt{\det(\M)} {\left(\tr(\JMJ) \right)}^{\frac{dp}{2}-1} \M^{-1} \J^T \J \M^{-1}
      + \frac{\theta}{2} \sqrt{\det(\M)} {\left(\tr(\JMJ) \right)}^{\frac{dp}{2}} \M^{-1} \\
      \qquad + \frac{(1-2\theta)(1-p)d^{\frac{d p}{2}}}{2} \sqrt{\det(\M)} {\left( \frac{\det(\J)}{\sqrt{\det(\M)}}\right)}^{p} \M^{-1} ,\\
      \p{G}{\V{x}} = 0.
   \end{cases}
   \label{huang-3}
\end{equation}

A choice of $P$ to make the MMPDE invariant under the scaling transformations of $\M$
for the current functional is
\[
   P = {\det(\M)}^{\frac{p-1}{2}} .
\]
\qed{}
\end{example}

\section{A geometric discretization of meshing functionals}
\label{SEC:dis}

Let  $\Th = \{ K\}$ be the target simplicial mesh on $\Omega$ and $N$ and $N_v$ the numbers
of its elements and vertices, respectively.
We assume that the computational mesh $\Tc = \{ K_c\}$ is chosen to satisfy the following properties:
\begin{enumerate}[label=(\alph*)]

\item It has the same $N$ and $N_v$ as $\Th$.

\item There is a one-to-one correspondence between the elements of $\Tc$ and those of $\Th$.

\item $\Tc$ has the same connectivity when $\Tc$ is a real mesh (see the explanation below).

\item
There exist $\underline{\rho}, \overline{\rho} > 0$ such that
\begin{equation}
   \underline{\rho} N^{-\frac 1 d} \le \rho_{\!_{K_c}}
   \quad \text{and} \quad h_{K_c} \le \overline{\rho} N^{-\frac 1 d}
   \qquad \forall K_c \in \Tc
   ,
   \label{c-mesh-1}
\end{equation}
where $h_{K_c}$ and $\rho_{\!_{K_c}}$ denote the diameter and in-diameter (the diameter of the largest inscribed ellipsoid) of $K_c$, respectively.
Note that \cref{c-mesh-1} implies the conventional mesh regularity condition 
\[
   \frac{h_{K_c}}{\rho_{\!_{K_c}}} \le \frac{\overline{\rho}}{\underline{\rho}}
   \qquad \forall K_c \in \Tc
   .
\]
Moreover, it implies $|K_c| = \mathcal{O}(1/N)$ for all $K_c \in \Tc$, where $|K_c|$ denotes the volume of $K_c$.
\end{enumerate}

$\Tc$ can be a real mesh or a collection of $N$ simplexes.
For example, $\Tc$ can be a simplicial mesh induced from a rectangular/cubic mesh when $\Omega$ has a simple geometry.
For more complicated geometries, it can be a Delaunay mesh of $N_v$ uniformly distributed points.
For meshing functionals that are invariant under rotations and translations of the $\V{\xi}$-coordinates (such as the functionals considered in the previous section), $\Tc$ can be a collection of $N$ copies of the ``master'' element ${N^{-\frac{1}{d}}} \hat{K}$, where $\hat{K}$ is a given unitary equilateral simplex and ${N^{-\frac{1}{d}}} \hat{K}$ denotes the simplex resulting from scaling $\hat K$ by a factor of ${N^{-\frac{1}{d}}}$.
To see this, imagine that $\Tc$ is a uniform mesh so that each of its elements can be transformed to the master element using rotation and translation.
Then, due to the invariance property, any elment-wise approximation (see the discussion of the discretization below) of such a meshing functional using the affine mapping between $K \in \Th$ and its counterpart $K_c \in \Tc$ is unchanged if the affine mapping is replaced by that between $K$ and ${N^{-\frac{1}{d}}} \hat{K}$.
As a result, the discretization of the functional can be regarded as being carried out between $\Th$ and ${N^{-\frac{1}{d}}} \hat{K}$ or the collection of $N$ copies of ${N^{-\frac{1}{d}}} \hat{K}$.

The advantage of using the collection of $N$ copies of a simplex is obvious: no real computational mesh is needed anymore.
This is especially convenient if $\Omega$ has a complicated geometry for which a mesh with reasonable quality is difficult to obtain or for applications where it is burdensome to define a computational mesh (such as mesh smoothing, see \cref{ex:cami1a}).
On the other hand, a real computational mesh allows elements of different size and shape, which can be desirable in some applications.

For any element $K \in \Th$ and the corresponding element $K_c \in \Tc$, let $F_K: K_c \to K$ be
the affine mapping between them and $F_K'$ its Jacobian matrix.
Let the vertices of $K$ and $K_c$ be $\V{x}_j^K$, $j=0, \dotsc, d$ and $\V{\xi}_j^K$, $j=0, \dotsc, d$, respectively. 
It holds
\begin{equation}
\label{FK-1}
F_K' = E_K \hat{E}_K^{-1},\quad {(F_K')}^{-1} = \hat{E}_K E_K^{-1},\quad
|K| = \frac{1}{d!}|\det(E_K)|, \quad |K_c| = \frac{1}{d!}|\det(\hat{E}_K)|,
\end{equation}
where the edge matrices $E_K$ and $\hat{E}_K$ are defined as
\[
E_K = [\V{x}_1^K-\V{x}_0^K, \dotsc, \V{x}_d^K - \V{x}_0^K],\quad
\hat{E}_K = [\V{\xi}_1^K-\V{\xi}_0^K, \dotsc, \V{\xi}_d^K - \V{\xi}_0^K] .
\]
Let
\[
\Omega_c = \bigcup_{K_c \in \Tc} K_c .
\]
Note that $\Omega_c$ may be a real domain in $\mathbb{R}^d$ or a collection of $N$ simplexes.

We now describe the geometric discretization~\cite{HuaKam15a} for the functional \cref{fun-1}.
The idea is simple: the coordinate transformation $\V{x} = \V{x}(\V{\xi})$ is approximated
by the piecewise linear mapping $\{ F_K, \; K \in \Th\}$ and the integral in \cref{fun-1}
is approximated by the midpoint quadrature rule.\footnote{A more accurate quadrature rule
could be used; however, our numerical experience shows that
the simple midpoint quadrature rule works well for problems tested.}
This results in a Riemann sum which can be considered as a function
of the location of the physical vertices (in the $\V{x}$-formulation), according to the functional equivalence
discussed in the preceding section. (Note that $\Tc$ is given and, thus, known.)
From $\J \approx {(F_K')}^{-1} = \hat{E}_K E_K^{-1}$ on $K$, we have
\begin{equation}
\label{fun-3}
I_h(\V{x}_1, \dotsc, \V{x}_{N_v}) = \sum_{K \in \Th} |K|
\, G(\hat{E}_K E_K^{-1}, \frac{\det(\hat{E}_K)}{\det(E_K)},\M_K, \V{x}_K),
\end{equation}
where $\V{x}_K$ is the centroid of $K$ and $\M_K = \frac{1}{d+1} \sum_{i=0}^d \M(\V{x}_i^K)$.
As for the continuous case, the MMPDE for \cref{fun-3} is defined as
\begin{equation}
\label{mmpde-3}
\frac{d \V{x}_i}{d t} = - \frac{P(\V{x}_i)}{\tau} {\left (\frac{\partial I_h}{\partial \V{x}_i}\right )}^T,
\quad i = 1, \dotsc, N_v, \; t > 0 .
\end{equation}
The derivatives on the right-hand side of the mesh equation \cref{mmpde-3} can be found analytically
in a compact matrix form (see~\cite{HuaKam15a} for the derivation):
\begin{equation}
\label{mmpde-4}
\frac{d \V{x}_i}{d t} = \frac{P(\V{x}_i)}{\tau} \sum_{K \in \omega_i} |K| \V{v}_{i_K}^K , \quad i = 1, \dotsc, N_v,
\end{equation}
where $\omega_i$ is the patch of the elements having $\V{x}_i$ as one of their vertices
and $i_K$ and $\V{v}_{i_K}^K$ are the local index and velocity of vertex $\V{x}_i$ on the element $K$,
respectively.  The local velocities are 
\begin{align}
\begin{bmatrix} {(\V{v}_{1}^K)}^T \\ \vdots \\ {(\V{v}_{d}^K)}^T \end{bmatrix}
 =& - G  E_K^{-1}
      +  E_K^{-1} \p{G}{\J} \hat{E}_K E_K^{-1}
      +  \p{G}{\det(\J)} \frac{\det(\hat{E}_K)}{\det(E_K)} E_K^{-1}
\label{mmpde-5} \\
 &\qquad \quad - \frac{1}{d+1} \sum_{j=0}^{d} \tr\left( \p{G}{\M} \M_{j, K} \right)
      \begin{bmatrix} \p{\phi_{j,K}}{\V{x}} \\ \vdots \\ \p{\phi_{j,K}}{\V{x}} \end{bmatrix} 
     - \frac{1}{d+1} \begin{bmatrix} \p{G}{\V{x}} \\ \vdots \\ \p{G}{\V{x}} \end{bmatrix},
\notag \\
{(\V{v}_{0}^K)}^T =& - \sum_{k=1}^d {(\V{v}_{k}^K)}^T 
- \sum_{j=0}^{d} \tr\left( \p{G}{\M} \M_{j, K} \right) \p{\phi_{j,K}}{\V{x}}
-  \p{G}{\V{x}} ,
\label{mmpde-6}
\end{align}
where $\M_{j,K}=\M(\V{x}_j^K)$, $\phi_{j,K}$ is the linear basis function associated with $\V{x}_j^K$, and
\[
G, \quad \frac{\partial G}{\partial \J}, \quad \frac{\partial G}{\partial \det(\J)},
\quad \frac{\partial G}{\partial \M}, \quad \text{and} \quad \frac{\partial G}{\partial \V{x}}
\]
are evaluated at
\[
   \J = \hat{E}_K E_K^{-1}, \quad \det(\J) = \frac{\det(\hat{E}_K)}{\det(E_K)},
   \quad \M = \M_K,
   \quad \V{x} = \V{x}_K.
\]

The MMPDE~\cref{mmpde-3} should be modified properly for boundary vertices:
if $\V{x}_i$ is a fixed boundary vertex, the corresponding equation is replaced by
\[
   \frac{d \V{x}_i}{d t} = 0,
\]
and when $\V{x}_i$ is allowed to move on a boundary curve (in 2D) or surface (in 3D) represented by
$
   \phi(\V{x}) = 0
$,
then the mesh velocity $\p{\V{x}_i}{t}$ needs to be modified such that its normal component along the curve or surface is zero, i.e.,
\[
   \nabla \phi (\V{x}_i) \cdot \frac{d \V{x}_i}{d t} = 0.
\]

\begin{remark}
\label{rem-mmpde-3}
The formulation of the MMPDE \cref{mmpde-3} is similar to that of a spring model
for mesh movement (cf.~\cite[Section 7.3.2]{HuaRus11}),
with the right-hand side term acting as the sum of the spring forces between $\V{x}_i$ and its neighboring vertices.
This makes it amenable to time integration by both explicit and implicit schemes.
On the other hand, \cref{mmpde-3} is different from existing spring models for mesh movement.
It does not involve parameters such as spring constants that typically need fine tuning.
Moreover, the forces in \cref{mmpde-3} are defined based on the global meshing
functional \cref{fun-1}.
This property is very important since it provides a good chance to prevent the mesh from
becoming singular.
For example,  for the functional \cref{huang} the forces are defined to keep the mesh elements as regular
and uniform in the metric $\M$ as possible.
\qed{}
\end{remark}

\section{Mesh nonsingularity and existence of the limiting meshes}
\label{SEC:analysis}

In this section we study the nonsingularity of the mesh trajectory and the existence of the limiting meshes
as $t \to \infty$ for the semi-discrete MMPDE \cref{mmpde-3} and its discretization.

\subsection{Two lemmas}

The functionals in \cref{ex:Winslow,ex:Huang} involve a factor
$\tr(\J \M^{-1} \J^T) = \tr ({(F_K')}^{-1}\M_K^{-1} {(F_K')}^{-T})$. An equivalent form of it is
$\| {(F_K')}^{-1}\M_K^{-1} {(F_K')}^{-T}\|$. We first obtain a geometric interpretation for it, which is needed later
in our analysis.

\begin{lemma}
\label{lem-FK-2}
Let $\tilde{K}$ be an equilateral simplex and $K$ an arbitrary simplex in $\mathbb{R}^d$, $F_K: \tilde K \to K$ the affine mapping between them, and $\M_K$ a constant symmetric and positive definite matrix. Then,
\begin{align}
\frac{\tilde{a}^{2}}{a_{K,\M}^2} \le 
\| {(F_K')}^{-1}\M_K^{-1} {(F_K')}^{-T}\| \le \frac{d^2 \, \tilde{a}^2}{a_{K, \M}^2} ,
\label{lem-FK-2-1}
\end{align}
where $a_{K,\M}$ is the minimum altitude of $K$ in the metric $\M_K$ and $\tilde{a}$
is the altitude of $\tilde{K}$.
\end{lemma}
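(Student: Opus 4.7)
The plan is to reduce to the Euclidean case via the square root of $\M_K$ and then bound the resulting operator norm from above and below using the geometry of affine maps between simplexes.

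First I would introduce the change of variables $\V{y} = \M_K^{1/2} \V{x}$, which maps $K$ to a new simplex $\tilde{K}_{\M} := \M_K^{1/2} K$. Since $\M_K$ is constant on $K$, the Euclidean metric on the image space corresponds to the $\M_K$-metric on the original space, so the Euclidean altitudes of $\tilde{K}_{\M}$ equal the $\M_K$-altitudes of $K$; in particular, the minimum Euclidean altitude $a$ of $\tilde{K}_{\M}$ equals $a_{K,\M}$. The affine map $\tilde{F}_K : \tilde{K} \to \tilde{K}_{\M}$ has Jacobian $\tilde{F}_K' = \M_K^{1/2} F_K'$, which gives the key algebraic identity
\[
   (F_K')^{-1} \M_K^{-1} (F_K')^{-T} = (\tilde{F}_K')^{-1} (\tilde{F}_K')^{-T},
\]
so the quantity to be bounded is just $\|(\tilde{F}_K')^{-1}\|^2$. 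The lemma is therefore reduced to showing $\tilde{a}^2/a^2 \le \|(\tilde{F}_K')^{-1}\|^2 \le d^2 \tilde{a}^2/a^2$.

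For the lower bound I would exploit the invariance of barycentric coordinates under the affine map. Writing $\tilde{\phi}_j$ and $\hat{\phi}_j$ for the barycentric coordinates of $\tilde{K}$ and $\tilde{K}_{\M}$ respectively, the identity $\tilde{\phi}_j = \hat{\phi}_j \circ \tilde{F}_K$ yields $\nabla \hat{\phi}_j = (\tilde{F}_K')^{-T} \nabla \tilde{\phi}_j$. Since $|\nabla \tilde{\phi}_j| = 1/\tilde{a}$ for every vertex of the equilateral simplex and $|\nabla \hat{\phi}_j| = 1/\hat{a}_j$ with $\hat{a}_j$ the altitude from the $j$-th vertex of $\tilde{K}_{\M}$, picking $j$ that realizes the minimum altitude $a$ gives $\|(\tilde{F}_K')^{-T}\| \ge (1/a)/(1/\tilde{a}) = \tilde{a}/a$, which squares to the desired bound.

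For the upper bound I would use the edge-matrix representation $\tilde{F}_K' = \hat{E} \tilde{E}^{-1}$ (where $\hat{E}$ and $\tilde{E}$ are the edge matrices of $\tilde{K}_{\M}$ and $\tilde{K}$), so $(\tilde{F}_K')^{-1} = \tilde{E} \hat{E}^{-1}$, and apply submultiplicativity. The two factors are bounded by explicit computations: the spectrum of $\tilde{E}^T \tilde{E} = (\tilde{h}^2/2)(I+J)$ (with $J$ the all-ones matrix and $\tilde{h}$ the edge length of $\tilde{K}$) has maximum eigenvalue $\tilde{h}^2 (d+1)/2$, and combining this with the regular-simplex relation $\tilde{h}^2 = 2d\,\tilde{a}^2/(d+1)$ gives $\|\tilde{E}\| = \sqrt{d}\,\tilde{a}$; meanwhile the rows of $\hat{E}^{-1}$ are $\nabla\hat{\phi}_1,\dots,\nabla\hat{\phi}_d$, so $\|\hat{E}^{-1}\|^2 \le \|\hat{E}^{-1}\|_F^2 = \sum_{j=1}^d 1/\hat{a}_j^2 \le d/a^2$. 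Multiplying these yields $\|(\tilde{F}_K')^{-1}\|^2 \le d^2 \tilde{a}^2/a^2$.

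The only real obstacle is bookkeeping of dimensional constants: one must verify that the equilateral simplex factor $\sqrt{d}$ from $\|\tilde{E}\|$ really comes out in terms of $\tilde{a}$ (not $\tilde{h}$), which is what makes the altitude-to-altitude bound tight at the factor $d^2$; everything else is routine linear algebra and the invariance identity for barycentric coordinates.
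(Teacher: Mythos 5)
Your proof is correct, and while the lower bound is essentially the paper's argument in disguise (both use the gradient of a barycentric coordinate as a test vector together with the identities $\lvert\tilde\nabla\tilde\phi_j\rvert=1/\tilde a$ and $(\nabla\hat\phi_j)^T\nabla\hat\phi_j=1/\hat a_j^2$), your upper bound takes a genuinely different route. The paper expands an arbitrary $\V{v}$ in the basis $\{\tilde\nabla\tilde\phi_i\}_{i=1}^d$, bounds the numerator of the Rayleigh quotient by Cauchy--Schwarz, and controls the denominator by showing that the Gram matrix $B=(\tilde\nabla\tilde\phi_i^T\tilde\nabla\tilde\phi_j)$ is an $M$-matrix with $\lambda_{\min}(B)\ge 1/(d\,\tilde a^2)$ via its row sums and the dihedral-angle relation $\cos\tilde\alpha=1/d$; the two factors of $d$ come from $(\sum_i\lvert\alpha_i\rvert)^2\le d\sum_i\alpha_i^2$ and from $\lambda_{\min}(B)$. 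You instead factor $(\tilde F_K')^{-1}=\tilde E\hat E^{-1}$, use submultiplicativity, compute $\lVert\tilde E\rVert^2=d\,\tilde a^2$ exactly from the spectrum of $I+J$ together with the regular-simplex relation $\tilde h^2=2d\,\tilde a^2/(d+1)$, and bound $\lVert\hat E^{-1}\rVert$ by its Frobenius norm using the fact (also used elsewhere in the paper) that the rows of $\hat E^{-1}$ are the gradients $\nabla\hat\phi_1,\dots,\nabla\hat\phi_d$; the two factors of $d$ now come from $\lVert\tilde E\rVert^2$ and from the Frobenius estimate. Both yield the same constant $d^2$. Your preliminary reduction via $\M_K^{1/2}$ is a purely notational simplification (the paper carries $\M_K^{-1}$ through the quadratic forms), but it does make the altitude interpretation transparent; your upper bound avoids $M$-matrix theory at the cost of an explicit eigenvalue computation for the equilateral edge Gram matrix, which is the one place requiring care and which you have handled correctly.
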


\begin{proof}

Let $\tilde \phi_i$ ($i = 0, \dotsc, d$) be the linear basis functions associated with the vertices of $\tilde{K}$.
It holds (e.g.,\ K{\v{r}}{\'{\i}}{\v{z}}ek and Lin~\cite{KriLin95} or Lu et al.~\cite[Lemma 1]{LuHuaQiu14})
\[
   {(\tilde \nabla \tilde \phi_i)}^T \tilde \nabla \tilde \phi_i = \frac{1}{\tilde a^2}, \quad i = 0, \dotsc, d,
\]
where $\tilde \nabla$ is the gradient
operator on $\tilde K$ with respect to $\V{\xi}$. (Recall that $\tilde K$ is equilateral so all of its altitudes are
the same.)

Let $\phi_i(\V{x}) = \tilde \phi_i(F_K^{-1}(\V{x}))$, where $F_K^{-1}$ is the inverse mapping of $F_K$.
Since $F_K$ is affine, $\phi_i$ is also a linear basis function on $K$. The altitudes of $K$ in the metric $\M_K$
are related to the gradient of the linear basis functions by
(cf.~\cite[(25) with $\D_K$ being replaced by $\M_K^{-1}$]{LuHuaQiu14})
\[
   {(\nabla \phi_i)}^T \M_K^{-1} \nabla \phi_i  = \frac{1}{a_{i,K,\M}^2} ,
\]
where $\nabla$ stands for the gradient operator  on $K$ with respect to $\V{x}$.
$\phi_i(\V{x}) = \tilde \phi_i(F_K^{-1}(\V{x}))$ and the chain rule give
\[
\nabla \phi_i = {(F_K')}^{-T}\tilde \nabla \tilde \phi_i .
\]

We are now ready to prove \cref{lem-FK-2-1}:
\begin{align*}
\| {(F_K')}^{-1} \M_K^{-1} {(F_K')}^{-T}\|
& = \max_{\V{v} \ne 0} \frac{\V{v}^T {(F_K')}^{-1} \M_K^{-1} {(F_K')}^{-T} \V{v}} {\V{v}^T \V{v}} 
\\
& \ge  \frac{{(\tilde\nabla\tilde\phi_i)}^T {(F_K')}^{-1} \M_K^{-1} {(F_K')}^{-T} \tilde\nabla\tilde\phi_i}
{{(\tilde\nabla\tilde\phi_i)}^T \tilde\nabla\tilde\phi_i} \\
& =  \frac{ {(\nabla\phi_i)}^T \M_K^{-1} \nabla \phi_i} {\tilde{a}^{-2}} = \frac{\tilde{a}^{2}}{a_{i,K,\M}^2} ,
\end{align*}
which implies
\[
\| {(F_K')}^{-1} \M_K^{-1} {(F_K')}^{-T}\| \ge \max_{i} \frac{\tilde{a}^{2}}{a_{i,K,\M}^2}
= \frac{\tilde{a}^{2}}{a_{K,\M}^2} .
\]
Thus, we obtained the left inequality of \cref{lem-FK-2-1}.

On the other hand, $\tilde \nabla\tilde \phi_i$, $i = 1, \dotsc, d$, form a set of $d$ linearly independent vectors.
Thus, we can represent any $\V{v} \in\mathbb{R}^d$ as
\[
\V{v} = \sum_{i=1}^{d} \alpha_i \tilde\nabla\tilde\phi_i .
\]
Then,
\[
\V{v}^T \V{v} = \sum_{i,j=1}^{d} \alpha_i \alpha_j \tilde\nabla\tilde\phi_i^T  \tilde\nabla\tilde\phi_j
\] 
and
\begin{align*}
\V{v}^T {(F_K')}^{-1} \M_K^{-1} {(F_K')}^{-T} \V{v}
& =  \sum_{i,j=1}^{d} \alpha_i \alpha_j \tilde\nabla\tilde\phi_i^T {(F_K')}^{-1} \M_K^{-1} {(F_K')}^{-T}  \tilde\nabla\tilde\phi_j 
\\
& =  \sum_{i,j=1}^{d} \alpha_i \alpha_j \nabla\phi_i^T \M_K^{-1} \nabla \phi_j  \\
& =  \sum_{i,j=1}^{d} \alpha_i \alpha_j {(\M_K^{-\frac 1 2}\nabla\phi_i)}^T  (\M_K^{-\frac 1 2} \nabla \phi_j ) .
\end{align*}
Thus,
\begin{align*}
\V{v}^T {(F_K')}^{-1} \M_K^{-1} {(F_K')}^{-T} \V{v}
& \le  \sum_{i,j=1}^{d} |\alpha_i| \;  |\alpha_j| \frac{1}{a_{i, K, \M} a_{j, K, \M}} 
 \le  \frac{1}{a_{K, \M}^2} \sum_{i,j=1}^{d} |\alpha_i| \;  |\alpha_j| \\
& = \frac{1}{a_{K, \M}^2} {\left(\sum_{i=1}^{d} |\alpha_i|\right)}^2
\le \frac{d}{a_{K, \M}^2} \sum_{i=1}^{d} \alpha_i^2.
\end{align*}
Then,
\begin{align}
\| {(F_K')}^{-1}\M_K^{-1} {(F_K')}^{-T}\|
& \le 
 \frac{d}{a_{K, \M}^2} \max_{\V{\alpha} \ne 0}
\; \frac{\sum_{i=1}^{d} \alpha_i^2  }
{ \sum_{i,j=1}^{d} \alpha_i \alpha_j \tilde \nabla\tilde\phi_i^T  \tilde \nabla\tilde \phi_j } .
\label{lem-FK-2-3}
\end{align}

We now establish a lower bound on the smallest eigenvalue of the matrix
$B = {( \tilde \nabla\tilde\phi_i^T  \tilde \nabla\tilde \phi_j)}_{i,j=1}^{d}$.
Since $\tilde{K}$ is equilateral, it has the same altitude and the same dihedral angle.
This gives
\[
\tilde \nabla\tilde\phi_i^T  \tilde \nabla\tilde \phi_j = \begin{cases}
\frac{1}{\tilde{a}^2},& \quad i = j \\
- \frac{\cos(\tilde{\alpha}) }{\tilde{a}^2} = - \frac{1 }{d\; \tilde{a}^2}, & \quad i \neq j  
\end{cases}
\]
where $\tilde{\alpha}$ is the dihedral angle between the two faces of $\tilde{K}$ not containing the $i^{\text{th}}$
and $j^{\text{th}}$ vertices each. 
Thus, $B$ is a $Z$-matrix. 
Moreover, from $\sum\limits_{j = 0}^{d} \tilde \nabla\tilde\phi_j = 0$,
\[
\sum\limits_{j = 1}^{d} B_{i,j} = \tilde\nabla\tilde\phi_i^T \sum\limits_{j = 1}^{d} \tilde\nabla\tilde\phi_j
=  - \tilde\nabla\tilde\phi_i^T \tilde\nabla\tilde\phi_{0} 
= \frac{\cos(\tilde{\alpha}) }{\tilde{a}^2} 
= \frac{1}{d\; \tilde{a}^2} > 0 .
\]
This implies that $B$ is an $M$-matrix. We have
\[
\lambda_{\min}(B) \ge \min\limits_{i} \sum\limits_{j = 1}^{d} B_{i,j} \ge \frac{1}{d\; \tilde{a}^2} 
\]
and
\[
\sum\limits_{i,j = 1}^{d} \alpha_i \alpha_j  \tilde \nabla\tilde\phi_i^T  \tilde \nabla\tilde \phi_j \ge 
\frac{1}{d\; \tilde{a}^2} \sum_{i=1}^d \alpha_i^2 .
\]
Thus, from \cref{lem-FK-2-3} we get
\[
\| {(F_K')}^{-1}\M_K^{-1} {(F_K')}^{-T}\| \le \frac{d^2 \; \tilde{a}^2}{a_{K, \M}^2} ,
\]
which gives the right inequality of \cref{lem-FK-2-1}.
\end{proof}

\Cref{lem-FK-2} indicates that
\begin{equation}
\| {(F_K')}^{-1}\M_K^{-1} {(F_K')}^{-T}\| \sim a_{K,\M}^{-2}
\end{equation}
if $\tilde{K}$ is chosen to further satisfy $|\tilde K| = \mathcal{O}(1)$.

It is also interesting to obtain a geometric interpretation for
$\| {(F_K')}^T \M_K F_K'\|$. In this case, we do not need to require that $\tilde{K}$ be equilateral.

\begin{lemma}
\label{lem-FK-1}
Let $\tilde{K}$ and $K$ be two arbitrary simplexes in $\mathbb{R}^d$, $F_K: \tilde K \to K$ the affine mapping between them, and $\M_K$ be a constant symmetric and positive definite matrix. Then,
\begin{equation}
\frac{ h_{K,\M}^2}{\tilde h^2} \le \| {(F_K')}^T \M_K F_K'\| \le \frac{ h_{K,\M}^2}{\tilde \rho^2} ,
\label{lem-FK-1-1}
\end{equation}
where $h_{K,\M}$ is the diameter of $K$ in the metric specified by $\M_K$
and $\tilde h$ and $\tilde \rho$ are the diameter and the in-diameter
of $\tilde K$, respectively.
\end{lemma}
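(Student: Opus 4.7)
The plan is to work directly from the Rayleigh-quotient characterization
$$\|{(F_K')}^T \M_K F_K'\| = \max_{\V{v} \ne 0} \frac{(F_K' \V{v})^T \M_K (F_K' \V{v})}{\V{v}^T \V{v}},$$
which holds since ${(F_K')}^T \M_K F_K'$ is symmetric positive definite. Under the affine map $F_K : \tilde K \to K$, any vector $\V{v}$ realized as a difference of two points of $\tilde K$ pushes forward to $F_K' \V{v}$, a corresponding vector on $K$, with $\M_K$-length $\sqrt{\V{v}^T {(F_K')}^T \M_K F_K' \V{v}}$. The two inequalities of \cref{lem-FK-1-1} then encode two complementary geometric facts about $\tilde K$: its diameter $\tilde h$ caps the length of a test vector we can manufacture from a chord of $\tilde K$ (yielding the lower bound), while its in-diameter $\tilde \rho$ guarantees a chord of length $\tilde \rho$ in every direction (yielding the upper bound).

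For the lower inequality, I would use that the $\M_K$-diameter of $K$ is attained at a pair of vertices $\V{x}_i^K, \V{x}_j^K$, since the squared $\M_K$-distance is convex in each argument on the convex polytope $K$ and hence maximized at extreme points. Letting $\tilde{\V{v}} = \tilde{\V{x}}_i - \tilde{\V{x}}_j$ denote the corresponding edge vector of $\tilde K$, we have $F_K' \tilde{\V{v}} = \V{x}_i^K - \V{x}_j^K$, and therefore
$$h_{K,\M}^2 = \tilde{\V{v}}^T {(F_K')}^T \M_K F_K' \tilde{\V{v}} \le \|{(F_K')}^T \M_K F_K'\| \, \|\tilde{\V{v}}\|^2 \le \|{(F_K')}^T \M_K F_K'\| \, \tilde h^2,$$
where the last inequality uses that $\tilde{\V{v}}$ is an edge of $\tilde K$ and so has Euclidean length at most $\tilde h$. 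Dividing by $\tilde h^2$ gives the left inequality of \cref{lem-FK-1-1}.

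For the upper inequality, let $\V{v}$ be an arbitrary unit vector. The in-diameter $\tilde \rho$ supplies points $\tilde{\V{p}}, \tilde{\V{q}} \in \tilde K$ with $\tilde{\V{p}} - \tilde{\V{q}} = \tilde \rho \V{v}$ (take a diametral chord in direction $\V{v}$ of the inscribed ball centered at the incenter of $\tilde K$). Their images $F_K(\tilde{\V{p}}), F_K(\tilde{\V{q}})$ lie in $K$ and differ by $\tilde \rho F_K' \V{v}$, so their $\M_K$-distance is bounded above by $h_{K,\M}$. Squaring yields $\tilde \rho^2 \V{v}^T {(F_K')}^T \M_K F_K' \V{v} \le h_{K,\M}^2$, and taking the supremum over unit $\V{v}$ gives $\|{(F_K')}^T \M_K F_K'\| \le h_{K,\M}^2 / \tilde \rho^2$.

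The argument is mostly bookkeeping; the only real obstacle is justifying the two geometric facts (attainment of the $\M_K$-diameter of $K$ at a vertex pair, and existence of a chord of $\tilde K$ of length $\tilde \rho$ in every direction), both of which reduce to standard convexity arguments. The proof runs exactly parallel in structure to that of \cref{lem-FK-2}, but is noticeably cleaner because $\tilde K$ need not be equilateral, so no $M$-matrix analysis of the Gram matrix of $\tilde \nabla \tilde\phi_i$ is required.
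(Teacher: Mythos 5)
Your proposal is correct and follows essentially the same route as the paper: both bounds come from pushing chords of $\tilde K$ forward through $F_K'$ and comparing their Euclidean length in $\tilde K$ with the $\M_K$-length of their images in $K$ — chords of length at most $\tilde h$ for the lower bound, diametral chords of the inscribed ball (available in every direction) for the upper bound. The only cosmetic difference is that for the lower bound the paper simply applies the estimate to an arbitrary pair of points and takes the supremum, so your vertex-attainment argument, while valid, is not needed.
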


\begin{proof}
Consider any two points $\V{\xi}_1, \V{\xi}_2 \in \tilde K$ and the corresponding points  $\V{x}_1, \V{x}_2 \in K$. Then,
\[
(\V{x}_2 - \V{x}_1) = F_K' (\V{\xi}_2 - \V{\xi}_1) .
\]
This gives
\begin{align}
   {(\V{x}_2 - \V{x}_1)}^T \M_K (\V{x}_2 - \V{x}_1) 
      & = {(\V{\xi}_2 - \V{\xi}_1)}^T {(F_K')}^T \M_K F_K'  (\V{\xi}_2 - \V{\xi}_1)
\label{lem-FK-1-2}
\\
& \le \| {(F_K')}^T \M_K F_K' \| \cdot \| \V{\xi}_2 - \V{\xi}_1 \|^2
\notag \\
& \le \tilde{h}^2 \| {(F_K')}^T \M_K F_K' \| .
\notag
\end{align}
Since $\V{x}_1,\V{x}_2 \in K$ are arbitrary,
\[
h_{K,\M}^2 \le \tilde{h}^2\; \| {(F_K')}^T \M_K F_K' \| ,
\]
which gives the left inequality of \cref{lem-FK-1-1}.

Now consider two arbitrary opposing points $\V{\xi}_1$ and $\V{\xi}_2$ on the sphere
of the largest inscribed ball of $\tilde K$ (with
the diameter $\tilde{\rho}$). Dividing both sides of \cref{lem-FK-1-2}
by $\| \V{\xi}_1 - \V{\xi}_2\|^2 =  \tilde{\rho}^2$, we get
\[
\frac{{(\V{x}_2 - \V{x}_1)}^T \M_K (\V{x}_2 - \V{x}_1)}{\tilde{\rho}^2}
= \frac{ {(\V{\xi}_2 - \V{\xi}_1)}^T {(F_K')}^T M_K F_K'  (\V{\xi}_2 - \V{\xi}_1)}{\| \V{\xi}_1 - \V{\xi}_2\|^2 } .
\]
Taking the maximum over all points on the sphere of the largest inscribed ball, the right-hand side is
equal to $\| {(F_K')}^T M_K F_K' \|$ while the left-hand side is less than $h_{K,M}^2/\tilde{\rho}^2$. Hence,
\[
\|{(F_K')}^T M_K F_K' \| \le \frac{h_{K,M}^2}{\tilde{\rho}^2} ,
\]
which gives the right inequality of \cref{lem-FK-1-1}.
\end{proof}

\Cref{lem-FK-1} implies that $\| {(F_K')}^T \M_K F_K'\|$ is equivalent to $h_{K,\M}^2$, i.e.,
\begin{equation}
\| {(F_K')}^T \M_K F_K'\| \sim h_{K,\M}^2,
\end{equation}
when $\tilde{K}$ is chosen to be a unitary equilateral simplex.

Note that interchanging the roles of $K$ and $\tilde{K}$ and replacing $\M_K$ by $\M_K^{-1}$
in \cref{lem-FK-1} provide bounds for $\| {(F_K')}^{-1}\M_K^{-1} {(F_K')}^{-T}\|$ as well.
However, these bounds are not as sharp as bounds in \cref{lem-FK-2}.

\subsection{Mesh nonsingularity}

We first consider the semi-discrete MMPDE \cref{mmpde-3}.
In practical computation, proper modifications of the MMPDE for boundary vertices are required.
Since the analysis is similar for the MMPDE with or without these modifications,
for simplicity in the following we consider only the case without modifications.

\begin{theorem}
\label{thm-ns-1}
Assume that the meshing functional \cref{fun-1} satisfies the coercivity condition
\begin{equation}
   \label{coercive-1}
   G(\J, \det(\J), \M, \V{x}) 
   \ge \alpha 
   {\left [ \tr(\J \M^{-1} \J^T)\right ]}^q-\beta,
   \quad \forall \V{x} \in \Omega,
\end{equation}
with $q > d/2$, where $\alpha > 0$ and $\beta \ge 0$ are constants.

Then, the elements of the mesh trajectory of the semi-discrete MMPDE \cref{mmpde-3} will have positive volumes for $t > 0$ if they have positive volumes initially.

Moreover, their minimum altitudes in the metric $\M$ and their volumes are bounded below by
\begin{align}
   \label{aK-3}
   a_{K,\M} &\ge C_1
   \; \underline{\rho}^{\frac{2 q}{2q-d}}
   \; \overline{m}^{-\frac{d}{2 (2q - d)}}
   \; N^{-\frac{2q}{d (2q-d)}}
   \quad \forall K \in \Th,
   \quad \forall t > 0
   ,
\\
\label{aK-4}
   |K| &\ge C_2
   \; \underline{\rho}^{\frac{2 q d}{2q-d}}
   \; \overline{m}^{-\frac{d^2}{2 (2q - d)}-\frac{d}{2}}
   \; N^{-\frac{2q}{ (2q-d)}} \quad \forall K \in \Th,
   \quad \forall t > 0
   ,
\end{align}
where $C_1$ and $C_2$ are constants given by
\begin{equation}
   C_1 = {\left(
         \frac{\alpha \hat{a}^{2 q} }{d!\; \hat{h}^{2 q} \left (\beta |\Omega| + I_h(\Th(0))\right )}
      \right)}^{\frac{1}{2q-d}},
   \quad C_2 = \frac{C_1^d}{d!} ,
\label{C1C2}
\end{equation}
$\hat{h}$ and $\hat{a}$ are the diameter and height of $\hat{K}$, and
$\overline{m}$ and $\underline{\rho}$ are constants defined in \cref{M-1} and \cref{c-mesh-1}.
\end{theorem}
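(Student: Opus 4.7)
The plan rests on the observation that the semi-discrete MMPDE \cref{mmpde-3} is a modified gradient flow for $I_h$: a direct calculation gives
\[
   \frac{d}{dt} I_h(\Th(t)) \;=\; -\sum_{i=1}^{N_v} \frac{P(\V x_i)}{\tau}\, \Bigl\|\p{I_h}{\V x_i}\Bigr\|^2 \;\le\; 0,
\]
so $I_h(\Th(t)) \le I_h(\Th(0))$ as long as $\Th(t)$ is nonsingular. Applying the elementwise coercivity \cref{coercive-1} to the Riemann sum \cref{fun-3} and using $\sum_K |K| = |\Omega|$ yields the pointwise estimate
\[
   \alpha\,|K_0|\,\bigl[\tr(\J\M^{-1}\J^T)\bigr]_{K_0}^{\,q} \;\le\; I_h(\Th(0)) + \beta\,|\Omega|
   \qquad \forall K_0 \in \Th,
\]
for every $t$ at which the mesh is nonsingular. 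This single pointwise inequality, combined with two purely geometric estimates, will pin down both $a_{K,\M}$ and $|K|$ from below.

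The first geometric estimate is a lower bound on $\tr(\J\M^{-1}\J^T)$ in terms of $a_{K,\M}$. Since \cref{lem-FK-2} requires an equilateral reference whereas $K_c$ need not be equilateral, I would apply it not to $F_K$ directly but to the composite affine map $\tilde F_K := F_K\circ F_{\hat K\to K_c}: \hat K\to K$, where $\hat K$ is the unitary equilateral simplex introduced in \cref{SEC:dis}. Writing $B := (F_{\hat K\to K_c}')^{-1}$ and $A := (F_K')^{-1}\M_K^{-1}(F_K')^{-T}$, the factorization $(\tilde F_K')^{-1}\M_K^{-1}(\tilde F_K')^{-T} = B A B^T$ together with $\|B A B^T\|\le \|B\|^2\|A\|$ reduces the task to bounding $\|B\|^2$, which is handled by \cref{lem-FK-1} (applied to $F_{\hat K\to K_c}^{-1}: K_c\to\hat K$ with metric $I$) together with the computational-mesh regularity $\rho_{K_c}\ge\underline\rho\, N^{-1/d}$ from \cref{c-mesh-1}. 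Using $\tr(\J\M^{-1}\J^T) \ge \|A\|$, the net outcome is
\[
   \bigl[\tr(\J\M^{-1}\J^T)\bigr]_K \;\ge\; \frac{\hat a^{\,2}\,\underline\rho^{\,2}}{\hat h^{\,2}\, N^{2/d}\, a_{K,\M}^{\,2}} .
\]
The second ingredient is the simplex inequality $|K|_{\M_K} \ge a_{K,\M}^{\,d}/d!$, valid for any simplex because the regular simplex extremizes the ratio $a_{\min}/|K|^{1/d}$ with a constant $1/c_d$ satisfying $c_d \le d!$; together with $\sqrt{\det\M_K}\le\overline m^{\,d/2}$ this gives $|K| \ge a_{K,\M}^{\,d}/(d!\,\overline m^{\,d/2})$.

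Substituting both geometric estimates into the elementwise energy bound and using $q > d/2$ (so that $2q-d>0$), one can solve algebraically for $a_{K,\M}$; the powers of $\underline\rho$, $\hat a/\hat h$, $\overline m$ and $N$, together with the factor $(\alpha/(d!(\beta|\Omega|+I_h(\Th(0)))))^{1/(2q-d)}$, assemble into exactly the constant $C_1$ and the exponents in \cref{aK-3}. Feeding this back into $|K|\ge a_{K,\M}^{\,d}/(d!\,\overline m^{\,d/2})$ produces \cref{aK-4} with $C_2 = C_1^d/d!$, the extra $\overline m^{\,-d/2}$ accounting for the shift in the exponent of $\overline m$. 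Mesh nonsingularity for all $t>0$ is then a standard continuation argument: the right-hand side of \cref{mmpde-3} is smooth on the open set of nonsingular meshes, so the solution exists locally; if its maximal existence interval were finite, some $|K|$ or $a_{K,\M}$ would have to collapse, contradicting the uniform positive lower bounds already established (which rest only on $I_h(t)\le I_h(\Th(0))$, itself valid throughout the interval). The main technical obstacle I anticipate is the composite-map step: one must pick the reference simplex correctly and apply the right inequalities from \cref{lem-FK-2,lem-FK-1} so that both $\underline\rho$ and the master-element quantities $\hat a,\hat h$ surface with the exponent $2q/(2q-d)$ that appears in $C_1$.
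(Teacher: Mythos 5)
Your proposal is correct and follows essentially the same route as the paper's proof: energy decrease along the gradient flow, the coercivity bound applied elementwise, the factorization through the master element $\hat K$ combined with \cref{lem-FK-1,lem-FK-2} and \cref{c-mesh-1} to get $\tr(\J\M^{-1}\J^T)\ge \underline\rho^2\hat a^2/(\hat h^2N^{2/d}a_{K,\M}^2)$, the inequality $|K|\sqrt{\det\M_K}\ge a_{K,\M}^d/d!$, and a continuation argument for positivity of the volumes. The only cosmetic differences are that you pass immediately to a per-element bound rather than keeping the sum over $K$, and you phrase the nonsingularity step as an ODE continuation argument rather than via the paper's explicit bound on the nodal velocities; both variants are sound.
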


\begin{proof}
Recall that \cref{mmpde-3} is a gradient system. As a consequence,
\begin{equation}
\frac{d I_{h}}{d t}
= \sum_{i=1}^{N_v} \p{I_{h}}{\V{x}_i} \frac{d \V{x}_i}{d t}
= - \sum_{i=1}^{N_v} \frac{P_i}{\tau} \left \| \p{I_{h}}{\V{x}_i} \right \|^2 \le 0 .
\label{Ih-b1}
\end{equation}
This implies 
\begin{equation}
\label{Ih-b2}
I_h(\Th(t)) \le I_h(\Th(0)),
\end{equation}
where $\Th(t) \equiv (\V{x}_1(t), \dotsc, \V{x}_{N_v}(t))$ is the mesh at time $t$.
From the coercivity \cref{coercive-1}, we get
\begin{equation}
   \label{Ih-b3}
   I_h(\Th(t)) 
   \ge \alpha \sum_{K\in \Th} |K|
      {\left [\tr( (\hat{E}_{K} E_K^{-1}) \M_K^{-1} {(\hat{E}_{K} E_K^{-1})}^T)\right ]}^q
   - \beta |\Omega|
   .
\end{equation}

Denote the edge matrix of $\hat{K}$ by $\hat{E}$ ($\hat{K}$ is the unitary equilateral simplex).
Then,
\begin{align*}
\tr\left( (\hat{E}_{K} E_K^{-1}) \M_K^{-1} {(\hat{E}_{K} E_K^{-1})}^T\right) 
& \ge \| (\hat{E}_{K} E_K^{-1}) \M_K^{-1} {(\hat{E}_{K} E_K^{-1})}^T \| \\
& = \| (\hat{E}_{K} \hat{E}^{-1}) \left ( \hat{E} E_K^{-1} \M_K^{-1}E_K^{-T}\hat{E}^T\right )
   {(\hat{E}_{K} \hat{E}^{-1})}^T \| \\
& \ge  \| \hat{E} E_K^{-1} \M_K^{-1}E_K^{-T}\hat{E}^T \| \cdot
\lambda_{\min}\left ((\hat{E}_{K} \hat{E}^{-1})  {(\hat{E}_{K} \hat{E}^{-1})}^T \right ) \\
& =  \frac{ \| \hat{E} E_K^{-1} \M_K^{-1}E_K^{-T}\hat{E}^T \| }
{\| {(\hat{E}\hat{E}_{K}^{-1} )}^T  (\hat{E}\hat{E}_{K}^{-1}) \|} .
\end{align*}
Applying \cref{lem-FK-1} (with $F_K' := \hat{E}\hat{E}_{K}^{-1}$, $\tilde{K} := K_c$, $K := \hat{K}$,
and $\M_K := I$) and using \cref{c-mesh-1}, we get
\[
\| {(\hat{E}\hat{E}_{K}^{-1} )}^T  (\hat{E}\hat{E}_{K}^{-1}) \| \le \frac{\hat{h}^2}{\rho_{K_c}^2}
\le \frac{\hat{h}^2 N^{\frac 2 d}}{\underline{\rho}^2} .
\]
Thus, we have
\[
\tr( (\hat{E}_{K} E_K^{-1}) \M_K^{-1} {(\hat{E}_{K} E_K^{-1})}^T)  \ge
\frac{\underline{\rho}^2} {\hat{h}^2 N^{\frac 2 d}} \| \hat{E} E_K^{-1} \M_K^{-1}E_K^{-T}\hat{E}^T \|.
\]
Applying \cref{lem-FK-2} (with $F_K' := E_K \hat{E}^{-1}$, $\tilde{K} := \hat{K}$, and $K := K$) gives
\[
   \| \hat{E} E_K^{-1} \M_K^{-1}E_K^{-T}\hat{E}^T \| \ge \frac{\hat{a}^2}{a_{K,\M}^2}
   ,
\]
which leads to
\[
\tr( (\hat{E}_{K} E_K^{-1}) \M_K^{-1} {(\hat{E}_{K} E_K^{-1})}^T)  \ge
\frac{\underline{\rho}^2 \hat{a}^2 } {\hat{h}^2 a_{K,\M}^2 N^{\frac 2 d}} .
\]
Inserting this into \cref{Ih-b3}, we get
\[
\frac{\alpha \underline{\rho}^{2q} \hat{a}^{2q} } {\hat{h}^{2 q}  N^{\frac{2 q}{d}}}
\sum_{K\in \Th} \frac{|K|}{ a_{K,\M}^{2 q} }
- \beta |\Omega| \le I_h(\Th(t)) ,
\]
or, using \cref{Ih-b2},
\begin{equation}
\sum_{K\in \Th} \frac{|K|}{ a_{K,\M}^{2 q} } \le \frac{\hat{h}^{2 q}  N^{\frac{2 q}{d}}} {\alpha \underline{\rho}^{2q} \hat{a}^{2q} }
\left ( \beta |\Omega| + I_h(\Th(0))\right ) .
\label{aK-1}
\end{equation}
Moreover, from \cref{M-1} we have
\[
|K| = \frac{|K| \sqrt{\det(\M_K)}}{\sqrt{\det(\M_K)}}
\ge \frac{a_{K,\M}^d}{d!\, \overline{m}^{\frac{d}{2}}} .
\] 
Combining this with \cref{aK-1} leads to
\begin{equation}
\label{aK-2}
\sum_{K\in \Th} \frac{1}{a_{K,\M}^{2q-d}} \le \frac{d!\, \overline{m}^{\frac{d}{2}} \hat{h}^{2 q}  N^{\frac{2 q}{d}}}
{\alpha \underline{\rho}^{2q} \hat{a}^{2q} }
\left ( \beta |\Omega| + I_h(\Th(0))\right ) ,
\end{equation}
which gives rise to \cref{aK-3} and \cref{aK-4}.

Finally, the volumes of the elements will stay positive if they are positive initially. To show this,
we recall that $G$ is assumed to have continuous derivatives up to the third order for $\|\J\| < \infty$,
$|\det(\J)| < \infty$, and $\V{x} \in \Omega$. Then, $G$ and its derivatives appearing in
\crefrange{mmpde-4}{mmpde-6} are bounded when their arguments
\[
\J := \hat{E}_K E_K^{-1}, \quad \det(\J) := \frac{\det(\hat{E}_K)}{\det(E_K)},\quad
\M := \M_K, \quad \V{x} := \V{x}_K
\]
are bounded. The latter is true since $\Tc$ is given (and fixed), $\M$ satisfies \cref{M-1},
$|\det(E_K)| = d! \, |K|$ is bounded away from zero as shown in \cref{aK-4},
and the vertices stay on $\Omega$ (and their coordinates are bounded).
The other factors in \crefrange{mmpde-4}{mmpde-6} that do not involve $G$
can also be shown to be bounded using the same argument and the fact~\cite[eq.~(38)]{HuaKam15a} that
\[
\begin{bmatrix} \p{\phi_{1,K}}{\V{x}}\\ \vdots \\ \p{\phi_{d,K}}{\V{x}} \end{bmatrix} = E_K^{-1} .
\]
Thus, the nodal mesh velocities are bounded if $|K|$ satisfies \cref{aK-4}.
This bound is global in the sense that it is independent of time and individual elements.
As a consequence, the mesh vertices will move continuously with time and the volumes of the elements cannot
jump over the bound \cref{aK-4} to become negative.
Thus, the volumes of the elements will stay positive and bounded from below if they are positive initially.
\end{proof}

\begin{remark}
\label{rem-ns-0}
From inequality \cref{aK-3} we can see that the ratio of $a_{K,\M}$ to the average element diameter, $N^{-\frac{1}{d}}$,
is bounded below by
\begin{equation}
\frac{a_{K,\M}}{N^{-\frac{1}{d}}} \ge
C_1
\; \underline{\rho}^{\frac{2 q}{2q-d}}
\; \overline{m}^{-\frac{d}{2 (2q - d)}}
\; N^{-\frac{1}{(2q-d)}}
\quad \forall K \in \Th.
\label{aK-5}
\end{equation}
This implies that the larger $q$ is, the closer $a_{K,\M}$ is to the average element diameter.
In particular, when $q \to \infty$, we have $a_{K,\M} \to \mathcal{O}(N^{-\frac{1}{d}})$ and the mesh
is close to being quasi-uniform. Similarly, from \cref{aK-4} we have
\begin{equation}
\frac{|K|}{N^{-1}} \ge
C_2
\; \underline{\rho}^{\frac{2 q d}{2q-d}}
\; \overline{m}^{-\frac{d^2}{2 (2q - d)}-\frac{d}{2}}
\; N^{-\frac{d}{ (2q-d)}}
\quad \forall K \in \Th.
\label{aK-6}
\end{equation}
For example, for Huang's functional \cref{huang} in 2D with $p=1.5$ and $q = pd/2 = 1.5$ we have $|K| \gtrsim N^{-3}$.
Note that this is a rather pessimistic worst case estimate. Recall that the functional \cref{huang} is designed to make the mesh
to satisfy the equidistribution and alignment conditions as closely as possible.
The equidistribution condition takes the form
\[
|K| \sqrt{\det(\M_K)} = \frac{\sigma_h}{N},\quad K \in \Th
\]
where $\sigma_h = \sum_{K} |K| \sqrt{\det(\M_K)}$.
Thus, when a mesh closely satisfies this condition we have
\[
\frac{|\Omega|}{N} {\left (\frac{\ \underline{m}\ }{\ \overline{m}\ }\right )}^{\frac{d}{2}} \le |K|
\le \frac{|\Omega|}{N} {\left (\frac{\ \overline{m}\ }{\ \underline{m}\ }\right )}^{\frac{d}{2}},\quad K \in \Th
\]
which implies $|K| = \mathcal{O}(N^{-1})$. This has been observed in numerical experiment;
e.g., see \cref{ex:sine:wave,fig:sine:wave:KminVsN} in \cref{SEC:numerics}.
\qed{}
\end{remark}

\begin{remark}
\label{rem-ns-1}
The key point of the proof is the energy decreasing property \cref{Ih-b1}. This property
is a crucial advantage of the geometric discretization \cref{fun-3} over 
discretizations based on the continuous MMPDE \cref{mmpde-2}
which, generally speaking, cannot be guaranteed to be a gradient system.
Another key component of the proof is the coercivity assumption \cref{coercive-1}.
Once again, this may not be preserved in general by discretizations based
on the continuous MMPDE \cref{mmpde-2}.
\qed{}
\end{remark}

\begin{remark}
\label{rem-ns-2}
It can be seen  that  Huang's functional \cref{huang} satisfies the coercivity assumption \cref{coercive-1}
with $p > 1$ whereas Winslow's functional does not. In the latter case, we have $q=1$ and \cref{aK-1} still holds.
But \cref{aK-1} with $q=1$ is not sufficient to guarantee a lower bound for $a_{K,\M}$.

It is worth pointing out that the functional of Huang and Russell~\cite[Example 6.2.3]{HuaRus11} also satisfies
the coercivity assumption \cref{coercive-1} for $p > 1$.
\qed{}
\end{remark}

\begin{remark}
\label{rem-ns-3}
The quantity $I_h$ defined in \cref{fun-3} can be viewed as a measure for mesh quality.
The smaller $I_h$ is, the better the mesh quality.
Then, \cref{Ih-b1} implies that the mesh quality improves when $t$ increases.
\qed{}
\end{remark}

\vspace{20pt}

We now consider the time integration of \cref{mmpde-3}. 
Denote the time instants by $t_n, \ n = 0, 1, \dotso$ with the property $t_n \to \infty$ as $n \to \infty$.
We are interested in methods in the form
\begin{equation}
   \label{one-step-1}
   \Th^{n+1} = \Phi^n(\Th^{n}), \quad n = 0, 1, \dotsc,
\end{equation}
for integrating the MMPDE \cref{mmpde-3}.
Methods in the form \cref{one-step-1} do not have to be one-step methods; the integration from $t_n$ to $t_{n+1}$ can be carried out in more than one step.
From the proof of \cref{thm-ns-1}, we have seen that it is important that the discrete functional $I_h$ is monotonically decreasing with the mesh trajectory.
Thus, we assume that the scheme has the property
\begin{equation}
\label{energy-1}
I_h(\Th^{n+1}) \le I_h(\Th^{n}), \quad n = 0, 1, \dots
\end{equation}
This is satisfied by many schemes such as the forward and the backward Euler,
and algebraically stable Runge-Kutta schemes (including Gauss and Radau IIA schemes) under a time-step restriction involving a local Lipschitz bound of the Hessian matrix of $I_h$ (e.g.,~\cite{HaiLub14,StuHum96}).

\begin{theorem}
\label{thm-ns-2}
Assume that the assumptions of \cref{thm-ns-1} are satisfied,
a numerical scheme in the form \cref{one-step-1}
is applied to MMPDE \cref{mmpde-3}
and the resulting mesh sequence ${\{ \Th^n \}}_{n=0}^\infty$
satisfies the property of monotonically decreasing
energy \cref{energy-1}.
If the time step is sufficiently small (but not diminishing) and
the elements of the mesh trajectory have positive volumes initially, they will have positive volumes
for all $t_n > 0$. Moreover, the minimum altitudes in the metric $\M$ and the element volumes are bounded from below
by \cref{aK-3} and \cref{aK-4}.
\end{theorem}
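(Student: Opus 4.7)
The plan is to proceed by induction on $n$, piggy-backing on the coercivity analysis already developed in the proof of \cref{thm-ns-1}. The base case $n = 0$ is the hypothesis that $\Th^0$ has elements of positive volume. For the inductive step, assume $\Th^n$ is non-singular and satisfies the lower bounds \cref{aK-3} and \cref{aK-4}. Once $\Th^{n+1}$ is shown to be non-singular, $I_h(\Th^{n+1})$ is finite, iterating \cref{energy-1} gives $I_h(\Th^{n+1}) \le I_h(\Th^0)$, and the chain of inequalities starting from \cref{Ih-b3} in the proof of \cref{thm-ns-1}, which invokes \cref{lem-FK-1,lem-FK-2} together with the coercivity condition \cref{coercive-1}, delivers the sharp bounds \cref{aK-3} and \cref{aK-4} for $\Th^{n+1}$ verbatim. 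That part of the argument never used continuity in time, so it transfers without change from the semi-discrete setting.

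Hence the only substantive task is to show that non-singularity persists from step $n$ to step $n+1$. In the semi-discrete setting, this came from trajectory continuity: bounded nodal velocities combined with a strict positive lower bound on $|K|$ prevented volumes from crossing zero. The plan is to mimic this in discrete time as follows. The inductive lower bounds \cref{aK-3} and \cref{aK-4} at step $n$ keep the arguments of $G$ and its derivatives appearing in \cref{mmpde-4,mmpde-5,mmpde-6} inside a compact set on which they are uniformly bounded, exactly as in the final paragraph of the proof of \cref{thm-ns-1}. This yields a uniform bound on the right-hand side of \cref{mmpde-3} at $\Th^n$, depending only on the quantities appearing in \cref{thm-ns-1} and hence independent of $n$.

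The main obstacle is the precise smallness condition on the time step. If $\Delta t_n = t_{n+1} - t_n$ is small enough (for an explicit scheme, smaller than a fixed multiple of the ratio of the altitude bound \cref{aK-3} to the just-derived velocity bound; for implicit or algebraically stable Runge--Kutta schemes, no larger than the local Lipschitz-type threshold that already guarantees \cref{energy-1}), the vertex displacement $\|\V{x}_i^{n+1} - \V{x}_i^n\|$ is strictly smaller than $a_{K,\M}$, so no element volume can change sign between $t_n$ and $t_{n+1}$. For implicit schemes one additionally has to verify that a solution of the update equation exists in the non-singular regime, which follows from a standard contraction argument on the same compact set.

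Because the lower bounds \cref{aK-3} and \cref{aK-4} are uniform in $n$, both the velocity bound and the resulting smallness requirement on $\Delta t_n$ are uniform in $n$ as well. A single non-diminishing threshold on the time step therefore suffices for the induction to close, no refinement of $\Delta t_n$ is needed as $n$ grows, and the claimed lower bounds on altitudes and element volumes hold at every $t_n > 0$.
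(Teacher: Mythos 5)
Your proposal is correct and follows essentially the same route as the paper: reduce the bounds \cref{aK-3,aK-4} to the semi-discrete argument via the energy decrease \cref{energy-1}, and preserve nonsingularity step by step using the uniform velocity bound on meshes satisfying \cref{aK-3,aK-4} together with a non-diminishing time-step restriction (plus solvability of the update for implicit schemes). Your version merely makes the induction explicit where the paper says ``the argument can be repeated for the next time step.''
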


\begin{proof}
The proof of \cref{aK-3,aK-4} for the fully discrete case is similar to that of \cref{thm-ns-1} for the semi-discrete case.
We only need to show that the volumes of the elements will stay positive if the time step is sufficiently small (but not diminishing).
To this end, we recall that $G$ is assumed to have continuous derivatives up to the third order.
As in the last paragraph of the proof of \cref{thm-ns-1}, we can show that when the mesh satisfies \cref{aK-3,aK-4}, the right-hand side (the velocity field) of \cref{mmpde-4} and its gradient and Hessian are bounded by bounds
independent of time and individual elements.
Then, it can be shown that there exists $\Delta t_0 > 0$ (depending only on the above-mentioned bounds
and thus not diminishing for the time being) such that, if $t_{n+1} - t_n \le \Delta t_0$, then $\|\V{x}_j^{n+1}-\V{x}_j^{n}\|$, $j = 1, \dotsc, N_v$, do not exceed a fixed fraction of the minimal altitude
and, in case an implicit scheme is used for \cref{one-step-1}, Newton's (or some other) iteration for the resulting
nonlinear algebraic equations converges.
This guarantees that the elements of the mesh will not become inverted during the current time step.
The argument can be repeated for the next time step since the new mesh satisfies \cref{aK-3,aK-4}, too.
Thus, the volumes of the elements stay positive for $t_n > 0$.
\end{proof}

\subsection{Existence of limiting meshes and minimizers}

We now investigate the convergence of the mesh trajectory as $t \to \infty$.
First, we consider the semi-discrete case \cref{mmpde-3} and then the fully discrete case.

\begin{theorem}
\label{thm-lim-1}
Under the assumptions of \cref{thm-ns-1}, for any nonsingular initial mesh,
the mesh trajectory $\{\Th(t), t > 0\}$ of MMPDE \cref{mmpde-3} has the following properties.
\begin{itemize}
\item[(a)] $I_h(\Th(t))$ has a limit as $t \to \infty$, i.e., 
\begin{equation}
\label{lim-1}
\lim_{t\to \infty} I_h(\Th(t)) = L .
\end{equation}
\item[(b)] The mesh trajectory has limiting meshes, all of which are nonsingular
and satisfy \cref{aK-3} and \cref{aK-4}.
\item[(c)] The limiting meshes are critical points of $I_h$, i.e., they satisfy
\begin{equation}
\label{lim-2}
\p{I_h}{\V{x}_i} = 0, \quad i = 1, \dotsc, N_v.
\end{equation}
\end{itemize}
\end{theorem}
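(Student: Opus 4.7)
The plan is to exploit the gradient-flow structure together with the uniform geometric bounds from \cref{thm-ns-1}, which already confine the trajectory to a compact region of configuration space on which $G$ and the velocity field are smooth and bounded.

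For part (a), I first note that the coercivity \cref{coercive-1} yields $G \ge -\beta$ pointwise, so $I_h(\Th(t)) \ge -\beta|\Omega|$ for all $t \ge 0$. Combined with the monotone decrease established in \cref{Ih-b1}, this makes $I_h(\Th(t))$ a bounded, nonincreasing function of $t$, and the finite limit $L$ in \cref{lim-1} exists.

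For part (b), the vertices stay in $\overline{\Omega}$, so the mesh configuration $\Th(t)$ lies in the compact set $\overline{\Omega}^{N_v}$, and any sequence $t_n \to \infty$ admits a subsequence along which $\Th(t_{n_k}) \to \Th_\infty$ by Bolzano-Weierstrass. Since altitudes and volumes of a simplex are continuous functions of its vertex coordinates, the lower bounds \cref{aK-3,aK-4} pass to the limit: every such $\Th_\infty$ is nonsingular and satisfies the same bounds.

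Part (c) is the main obstacle. Integrating \cref{Ih-b1} from $0$ to $\infty$ and using part (a) gives
\[
   \int_0^\infty \sum_{i=1}^{N_v} \frac{P_i}{\tau} \left\| \p{I_h}{\V{x}_i} \right\|^2 dt
   = I_h(\Th(0)) - L < \infty,
\]
but finiteness of this integral does not by itself force the integrand to zero. The plan is to apply a Barbalat-type argument. On the compact trajectory set defined by the bounds of \cref{thm-ns-1}, the smoothness assumption on $G$ implies that $\p{I_h}{\V{x}_i}$ is Lipschitz in the vertex coordinates and the velocity field \cref{mmpde-4} is uniformly bounded; hence the integrand above is uniformly continuous in $t$ (if it were bounded away from zero on a sequence $t_n \to \infty$, it would stay bounded away from zero on intervals of fixed positive length around each $t_n$, contradicting integrability). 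Therefore it tends to zero. Since \cref{M-1} and the explicit formulas for $P$ keep each $P_i$ bounded away from zero on the compact set, this forces $\p{I_h}{\V{x}_i}(\Th(t)) \to 0$ as $t \to \infty$ for every $i$. Passing to the limit along any subsequence $\Th(t_{n_k}) \to \Th_\infty$ provided by part (b) and using continuity of $\p{I_h}{\V{x}_i}$ in the vertex coordinates then yields \cref{lim-2}, completing the proof.
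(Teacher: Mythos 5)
Your parts (a) and (b) coincide with the paper's proof (monotonicity plus the lower bound $-\beta|\Omega|$ for (a); compactness of $\overline{\Omega}^{N_v}$ and continuity of altitudes and volumes for (b)). For part (c), however, you take a genuinely different and correct route. The paper argues by contradiction: it fixes a small $\epsilon>0$, compares a convergent subsequence $\Th(t_k)\to\Th^{*}$ with the time-shifted subsequence $\Th(t_k+\epsilon)\to\Th^{**}$, and uses a Taylor expansion of $I_h$ along the flow (with the second derivatives bounded on the compact set carved out by \cref{aK-3,aK-4}) to conclude $I_h(\Th^{**})<I_h(\Th^{*})$ whenever $\Th^{*}$ is not critical, contradicting the fact that both limits equal $L$. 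You instead integrate \cref{Ih-b1} to get $\int_0^\infty \sum_i \frac{P_i}{\tau}\bigl\|\p{I_h}{\V{x}_i}\bigr\|^2\,dt<\infty$ and invoke a Barbalat-type argument: the integrand is uniformly continuous in $t$ because the gradient of $I_h$ is Lipschitz on the compact trajectory set and the velocity field is uniformly bounded there (both facts already established in the proof of \cref{thm-ns-1}), and $P_i$ is bounded away from zero by \cref{M-1}. This yields the stronger conclusion that $\p{I_h}{\V{x}_i}(\Th(t))\to 0$ along the \emph{entire} trajectory, from which criticality of every limit point follows by continuity; the paper's argument only addresses the limit points directly. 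Both proofs rest on exactly the same ingredients (the energy identity, the uniform geometric bounds, and boundedness of derivatives of $I_h$ on the resulting compact set), so neither is cheaper, but yours is arguably cleaner and gives slightly more information. The one presentational quibble is that the parenthetical in your part (c) justifies the Barbalat conclusion rather than the uniform continuity it is attached to; when writing this up you should separate the two steps (Lipschitz-in-$t$ of the integrand, then the disjoint-intervals argument for a nonnegative integrable uniformly continuous function).
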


\begin{proof} (a) $I_h(\Th(t))$ has a limit since it is monotone, decreasing as $t \to \infty$ and
bounded from below by $-\beta |\Omega|$.

(b) \cref{thm-ns-1} implies that
the mesh stays nonsingular for $t > 0$ and its vertices remain on $\overline{\Omega}$
(the closure of $\Omega$). The compactness of $\overline{\Omega}$ means
that $\{\Th(t), t > 0\}$ has limits as $t \to \infty$. Obviously, the limiting
meshes satisfy \cref{aK-3} and \cref{aK-4} and thus are nonsingular.

(c) Consider a convergent mesh sequence $\Th(t_k)$, $k = 1, 2, \dotsc$ with the limit $\Th^{*}$.
We will prove that $\Th^{*}$ satisfies \cref{lim-2} using the contradiction argument: assume that $\Th^{*}$ does not
satisfy \cref{lim-2}. Take a small positive number $\epsilon > 0$
and choose a mesh sequence $\tilde{\Th}(t_k) \equiv \Th(t_k+\epsilon)$, $k = 1, 2, \dots$ From the compactness
of $\overline{\Omega}$, we can choose a convergent subsequence from $\{ \tilde{\Th}(t_k)\}$. Without
loss of generality, we pass the notation and consider $\{ \tilde{\Th}(t_k)\}$ as the subsequence with
the limit $\Th^{**}$. From the definition of $\tilde{\Th}(t_k)$ and Taylor's expansion, we have
\begin{align*}
I_h(\Th^{**}) & = \lim_{k \to \infty} I_h(\tilde{\Th}(t_k))
\\
& = \lim_{k \to \infty} I_h( \dotsc, \V{x}_i(t_k) + \epsilon \frac{d \V{x}_i}{d t}(t_k) + \mathcal{O}(\epsilon^2), \dotso)
\\
& = \lim_{k \to \infty} \left ( I_h(\Th(t_k)) + \epsilon \sum_{i=1}^{N_v} \p{I_h}{\V{x}_i}(\Th(t_k)) \frac{d \V{x}_i}{d t}(t_k)
+ \mathcal{O}(\epsilon^2) \right )
\\
& = \lim_{k \to \infty} \left ( I_h(\Th(t_k)) - \epsilon \sum_{i=1}^{N_v} \frac{P_i}{\tau}
\left \| \p{I_h}{\V{x}_i}(\Th(t_k)) \right \|^2
+ \mathcal{O}(\epsilon^2) \right ) .
\end{align*}
Since $I_h$ and its first and second derivatives are bounded under the conditions \cref{aK-3} and \cref{aK-4},
we can choose $\epsilon$ small enough such that the second term in the about equation dominates
the higher order terms. Moreover, the second term is positive since we have assumed that
$\Th^{*}$ does not satisfy \cref{lim-2}. Thus, from the above equation we get
\[
I_h(\Th^{**}) < I_h(\Th^{*}) .
\]
But this contradicts with \cref{lim-1} since it implies that $I_h(\tilde{\Th}(t_k)) - I_h(\Th(t_k)) \to 0$ as $k \to \infty$
or $I_h(\Th^{**}) - I_h(\Th^{*}) = 0$.
\end{proof}

\begin{theorem}
\label{thm-lim-2}
Under the assumptions of \cref{thm-ns-2}, for any nonsingular initial mesh,
the mesh trajectory $\{\Th^n, n = 0, 1, \dotso\}$ of the scheme \cref{one-step-1} applied to
MMPDE \cref{mmpde-3} has the following properties.
\begin{itemize}
\item[(a)] $I_h(\Th^n)$ has a limit as $n \to \infty$, i.e., 
\begin{equation}
\label{lim-3}
\lim_{n\to \infty} I_h(\Th^n) = L .
\end{equation}
\item[(b)] The mesh trajectory has limiting meshes. All of the those limiting meshes are nonsingular
and satisfy \cref{aK-3} and \cref{aK-4}.
\item[(c)] If we further assume that the scheme satisfies a stronger property of monotonically decreasing energy,
\begin{equation}
\label{energy-2}
\begin{cases}
I_h(\Th^{n+1}) \le I_h(\Th^{n}), \quad n = 0, 1, \dotsc,
\\
I_h(\Th^{n+1}) < I_h(\Th^{n}),\quad \text{if $\Th^{n}$ is not a critical point},
\end{cases}
\end{equation}
then the limiting meshes are critical points of $I_h$, i.e., they satisfy \cref{lim-2}.
\end{itemize}
\end{theorem}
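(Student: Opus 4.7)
The plan is to follow the three-part structure of the proof of \cref{thm-lim-1} as closely as possible, substituting \cref{thm-ns-2} for \cref{thm-ns-1} wherever a lower bound on altitudes and volumes is invoked, and using the extra assumption \cref{energy-2} only in part (c).

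For part (a), I would observe that the sequence $\{I_h(\Th^n)\}$ is nonincreasing by \cref{energy-1} and bounded from below by $-\beta|\Omega|$: indeed, dropping the nonnegative first term in \cref{Ih-b3} gives $I_h(\Th^n)\ge -\beta|\Omega|$ unconditionally under the coercivity assumption \cref{coercive-1}. A bounded monotone sequence converges, which proves \cref{lim-3}. For part (b), \cref{thm-ns-2} ensures that every $\Th^n$ satisfies \cref{aK-3,aK-4} and that all mesh vertices remain in the compact set $\overline{\Omega}$; by Bolzano--Weierstrass the sequence has at least one convergent subsequence, and the bounds \cref{aK-3,aK-4} pass to the limit, so every limiting mesh is nonsingular and satisfies those bounds.

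For part (c), I would argue by contradiction, mirroring the semi-discrete case but replacing the Taylor-expansion step by a one-step-map continuity argument. Suppose some limiting mesh $\Th^\ast$ of a convergent subsequence $\Th^{n_k}\to\Th^\ast$ is not a critical point. Under \cref{energy-2}, every noncritical mesh $\Th$ satisfies $I_h(\Phi^n(\Th))<I_h(\Th)$ strictly. What I need is a quantitative version: a fixed $\delta>0$ such that $I_h(\Th^{n_k+1})\le I_h(\Th^{n_k})-\delta$ for all large $k$. Once this is established, combining it with \cref{energy-1} for the indices $n\neq n_k$ gives $I_h(\Th^n)\to -\infty$, contradicting part (a). To get the quantitative decrease, I would exploit the uniform bounds in \cref{thm-ns-2}: the mesh trajectory lies in a compact set of nonsingular configurations on which $I_h$, its gradient, and its Hessian are bounded (as already noted in the proofs of \cref{thm-ns-1,thm-ns-2}), and on which the step map $\Phi^n$ depends continuously on the input mesh with a modulus that is uniform in $n$ provided the time step stays in a fixed interval bounded away from zero.

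The main obstacle will be precisely this uniform lower bound on the one-step drop. Pointwise \cref{energy-2} alone is not enough: one must show that on any compact set of nonsingular meshes on which $\sum_i \|\partial I_h/\partial \V{x}_i\|^2 \ge \eta>0$, the drop $I_h(\Th^n)-I_h(\Phi^n(\Th^n))$ is bounded below by a positive constant depending only on $\eta$ and on the uniform bounds on $I_h$ and its derivatives on the compact set. For the standard schemes cited after \cref{energy-1} (forward and backward Euler, algebraically stable Runge--Kutta), this quantitative decrease follows from the explicit descent expressions used in their energy-dissipation analyses, together with the fact that the time step is bounded below by the fixed $\Delta t_0$ from \cref{thm-ns-2}; this ``not diminishing'' hypothesis is exactly what prevents the decrease from degenerating to zero along the subsequence.
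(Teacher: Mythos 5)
Your proposal follows essentially the same route as the paper: parts (a) and (b) are proved exactly as in \cref{thm-lim-1} (monotone bounded sequence; compactness of $\overline{\Omega}$ together with the bounds \cref{aK-3,aK-4} passing to the limit), and part (c) is the same contradiction argument built on the shifted subsequence $\tilde{\Th}^{n_k}=\Th^{n_k+1}$. The paper's own proof of (c) is only a one-line reference to \cref{thm-lim-1} asserting that \cref{energy-2} and the contradiction argument suffice, so your explicit observation that a pointwise strict decrease is not enough and that one needs a uniform lower bound on the one-step drop near the non-critical limit $\Th^*$ (obtained from the scheme's descent estimates, continuity of $\nabla I_h$ on the compact set of meshes satisfying \cref{aK-3,aK-4}, and the time step being bounded away from zero) is a more careful treatment of the very step the paper leaves implicit.
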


\begin{proof}
The proof for (a) and (b) is similar to that of \cref{thm-lim-2}. The proof for (c) is also similar to
that of \cref{thm-lim-2} except that we choose $\tilde{\Th}^{n_k} = 
\Th^{n_k + 1}$, where $\Th^{n_k}$ is a subsequence converging to $\Th^*$. Then (c) can be proved
using \cref{energy-2} and the contradiction argument.
\end{proof}

\Cref{thm-lim-1,thm-lim-2} state that the values of the functional for the mesh trajectory
are convergent as time increases, which can be used as a stopping criterion for the computation. 
In general, however, there is no guarantee that the mesh trajectory converges.
To guarantee the convergence, a stronger descent in the functional value or a stronger
requirement on the meshing functional is needed.
For example, if the time marching scheme satisfies
\begin{equation}
\label{energy-3}
I_h(\Th^{n+1}) \le I_h(\Th^{n}) - \alpha \sum_{i=1}^{N_v} \left \| \p{I_h}{\V{x}_i}(\Th^n) \right \|^2, \quad n = 0, 1, \dotsc,
\end{equation}
for a positive constant $\alpha$, which is a stronger monotonically decreasing energy property than \cref{energy-2},
then we have
\[
\sum\limits_{n=0}^{\infty} \sum_{i=1}^{N_v} \left \| \p{I_h}{\V{x}_i}(\Th^n) \right \|^2 < \infty,
\]
which in turn means that 
\[
\sum_{i=1}^{N_v} \left \| \p{I_h}{\V{x}_i}(\Th^n) \right \| \to 0 \quad \text{as $\quad n \to \infty$}.
\]
Then, we may expect the mesh trajectory $\{\Th^n, n = 0, 1, \dotso\}$ to converge since typically
$(\Th^{n+1} - \Th^n)$ is proportional to the gradient of $I_h$.

On the other hand, a stronger condition can be placed on the meshing functional. In particular,
$\{\Th^n, n = 0, 1, \dotso\}$ is convergent if $I_h$ has a unique critical point.
To explain this, we consider a special example: the functional \cref{huang} with $\theta = \frac{1}{2}$ or the functional
\cref{huang-2}. In this case, we have
\begin{equation}
\label{huang-4}
I_h = \frac{1}{2} \sum_{K \in \Th} |K| \sqrt{\det(\M_K)}
{\left ( \tr (\hat{E}_K E_K^{-1} \M_K^{-1} E_K^{-T}\hat{E}_K^T )\right )}^{\frac{dp}{2}} ,
\quad \frac{dp}{2} \ge 1 .
\end{equation}
We show that $I_h$ is strongly convex about the variables $\V{\xi}_1, \dotsc, \V{\xi}_{N_v}$, for which it is sufficient to show the term ${\left ( \tr (\hat{E}_K E_K^{-1} \M_K^{-1} E_K^{-T}\hat{E}_K^T )\right )}^{\frac{dp}{2}}$
to be convex about $ E \equiv [\V{\xi}_0^K, \dotsc, \V{\xi}_d^K]$ for any element $K$.
Moreover, since
\[
\frac{d}{d \beta} \beta^{\frac{dp}{2}} = \frac{dp}{2}  \beta^{\frac{dp}{2}-1} \ge 0,\quad
\frac{d^2}{d \beta^2} \beta^{\frac{dp}{2}} = \frac{dp}{2}  (\frac{dp}{2}-1) \beta^{\frac{dp}{2}-2} \ge 0,
\]
from~\cite[Lemma 6.2.1]{HuaRus11} it suffices to show that
$\beta \equiv \tr (\hat{E}_K E_K^{-1} \M_K^{-1} E_K^{-T}\hat{E}_K^T )$ is a convex function
about $E$. 

Let
\[
\V{e} = \begin{bmatrix} 1 \\ \vdots \\1 \end{bmatrix} \in \mathbb{R}^d,
\quad
E_\eta = \begin{bmatrix} \V{\eta}_0^K, \dotsc, \V{\eta}_d^K \end{bmatrix} \in \mathbb{R}^{d \times (d+1)} ,
\]
where $E_\eta$ is an arbitrary matrix representing a perturbation of $E$. 
The quadratic form of the Hessian of $\beta$ with respect to $E$ can be expressed as
\[
\tr\left ( \frac{\partial \tr\left (\frac{\partial \beta}{\partial E} E_{\eta}\right ) }{\partial E} E_{\eta} \right ) ,
\]
where we have used the notation of scalar-by-matrix differentiation (cf. \cref{sbmd-1} and \cref{sbmd-2}
and~\cite{HuaKam15a}).
We first compute $\frac{\partial \beta}{\partial E}$. By examining the relation between $E$ and $\hat{E}_K$,
we get
\[
\frac{\partial \beta}{\partial [\V{\xi}_1^K, \dotsc, \V{\xi}_d^K]} = \frac{\partial \beta}{\partial \hat{E}_K},
\quad
\frac{\partial \beta}{\partial \V{\xi}_0^K} = - \V{e}^T \frac{\partial \beta}{\partial \hat{E}_K} ,
\]
which can be combined into
\[
\frac{\partial \beta}{\partial E} = \begin{bmatrix} - \V{e}^T \\ I \end{bmatrix} \frac{\partial \beta}{\partial \hat{E}_K} ,
\]
where $I$ is the $d$-by-$d$ identity matrix. To find $\frac{\partial \beta}{\partial \hat{E}_K}$,
we look at $\hat{E}_K$ as a function of $t$.
Then,
\begin{align*}
\frac{\partial \beta}{\partial t} = \tr \left ( \frac{\partial (\hat{E}_K E_K^{-1} \M_K^{-1} E_K^{-T} \hat{E}_K^T)}
{\partial t} \right )
= \tr \left ( 2  E_K^{-1} \M_K^{-1} E_K^{-T} \hat{E}_K^T \frac{\partial \hat{E}_K}{\partial t} \right ),
\end{align*}
which gives
\[
\frac{\partial \beta}{\partial \hat{E}_K} = 2  E_K^{-1} \M_K^{-1} E_K^{-T} \hat{E}_K^T .
\]
Thus,
\[
\tr\left (\frac{\partial \beta}{\partial E} E_{\eta} \right )= 
\tr\left ( 2 \begin{bmatrix} - \V{e}^T \\ I \end{bmatrix} E_K^{-1} \M_K^{-1} E_K^{-T} \hat{E}_K^T E_{\eta} \right ) .
\]
Repeating the process,
\begin{align*}
\tr\left ( \frac{\partial \tr\left (\frac{\partial \beta}{\partial E} E_{\eta}\right ) }{\partial E} E_{\eta} \right )
& = 2 \tr \left (   E_\eta \begin{bmatrix} - \V{e}^T \\ I \end{bmatrix} E_K^{-1} \M_K^{-1} E_K^{-T}
\begin{bmatrix} - \V{e}\;  I \end{bmatrix} E_\eta^T \right )
\\
& = 2 \left \| E_\eta \begin{bmatrix} - \V{e}^T \\ I \end{bmatrix} E_K^{-1} \M_K^{-\frac 1 2}\right \|_F^2 \ge 0,
\end{align*}
where $\| \cdot \|_F$ is the Frobenius matrix norm.
The equality in the above equation holds if and only if
\begin{equation}
\V{\eta}_0^K = \cdots = \V{\eta}_d^K .
\label{E-eta}
\end{equation}
Thus, the quadratic form of $I_h$ about $\V{\xi}_1, \dotsc, \V{\xi}_{N_v}$ is zero if and only if
the above equality holds for all $K_c \in \Tc$. Since at least one of the boundary vertices
is held fixed and its perturbation must be zero, \cref{E-eta} applies that $E_\eta = 0$
for the element containing the boundary vertex and then other elements, which means that
$I_h$ is strongly convex. As a consequence, $I_h$ has a unique critical point (which is the minimizer)
when $\Omega_c$ is convex.

Notice that the above uniqueness result is for $I_h$ as a function of the computational coordinates.
For the convergence of the mesh trajectory for \cref{mmpde-3} or its discretization,
we need the uniqueness result for $I_h$ as a function of the physical coordinates.
We use the argument of the functional equivalence described in \cref{SEC:fun}.
We first notice that the continuous functional in \cref{huang-2} is the same as the discrete functional
$I_h$ in \cref{huang-4} for the piecewise linear mapping
$\{ F_K: K_c \to K, \ K \in \Th\}$ and the piecewise constant metric tensor
$\{ \M_K, \ K \in \Th\}$. From the functional equivalence, we can
conclude that \emph{$I_h$ has a unique minimizer either as a function of the coordinates of the
physical vertices as long as $\Omega_c$ is convex}.
Then, the mesh trajectory is convergent.

\section{Numerical examples}
\label{SEC:numerics}

To demonstrate the theoretical findings, in particular the decrease of the meshing functional and the lower positive bound of the element volumes, we present numerical results obtained for several examples for mesh adaptation as well as mesh smoothing in two and three dimensions.
Huang's functional \cref{huang} with $p = \frac{3}{2}$ and $\theta = \frac{1}{3}$ is used in the computation.
The computational mesh is taken as the collection of $N$ copies of $N^{-\frac 1 d} \hat{K}$ where $\hat{K}$ is a given unitary equilateral simplex.
The MMPDE \cref{mmpde-4} with $\tau = 1$ (\cref{ex:2d:smoothing,ex:cami1a,ex:nine:spheres}) and $\tau = 0.01$ (\cref{ex:sine:wave}) is integrated using Matlab explicit ODE solver \emph{ode45} for mesh smoothing and implicit ODE solver \emph{ode15s} for mesh adaptation.
\emph{ode45} and \emph{ode15s} typically take multiple steps from $t_n$ to $t_{n+1}$ because they use adaptive step size and they have no options for a single time step.
Boundary vertices are allowed to move along the boundary in all examples but \cref{ex:cami1a}, where they are fixed.  Corner vertices are fixed in all examples.

\begin{example}[2D smoothing] 
\label{ex:2d:smoothing}
We use the MMPDE-based smoothing to improve the mesh quality: we start with an initial mesh,
perturb it (\cref{fig:2d:smoothing:start}) and use $\M = I$ to smooth the perturbed mesh.
\Cref{fig:2d:smoothing:10,fig:2d:smoothing:30} show the resulting mesh at $t=1.0$ and $t=3.0$.
The functional is monotonically decreasing. The minimal element volume is also decreasing
but seems to converge to a positive number and stay bounded from zero. This is consistent
with \cref{thm-ns-2} which states that the element volumes of the mesh is bounded below
by a positive number.

\begin{figure}[p]\centering{}
   \begin{subfigure}[t]{0.31\linewidth}\centering{}
      \includegraphics[clip, width=1.0\linewidth]{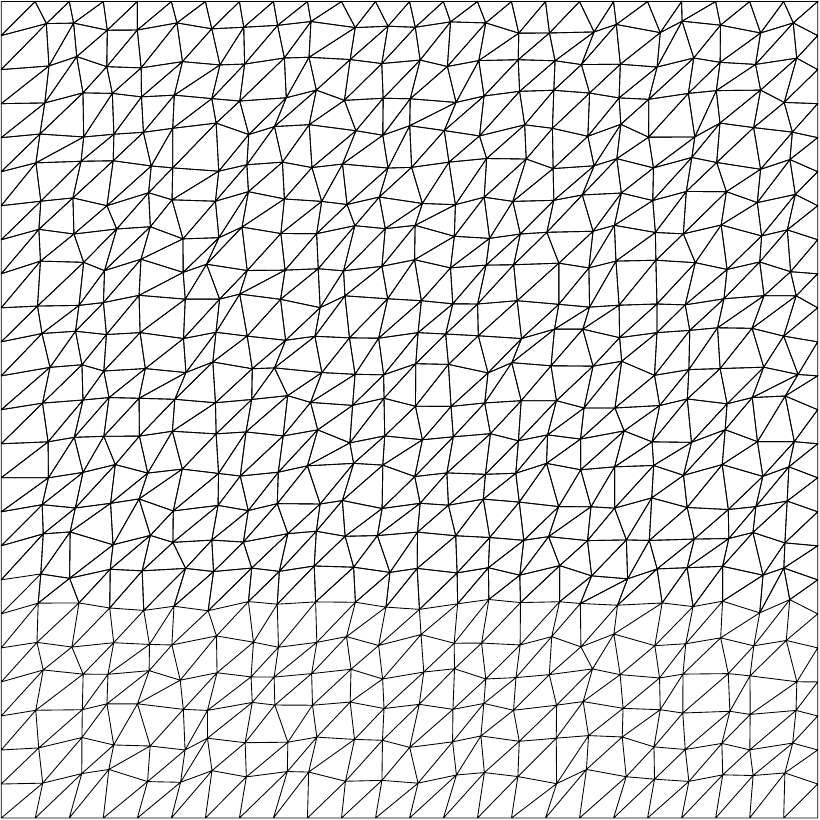}%
      \caption{perturbed initial mesh, $t=0$}\label{fig:2d:smoothing:start}
   \end{subfigure}%
   \hfill%
   \begin{subfigure}[t]{0.31\linewidth}\centering{}
      \includegraphics[clip, width=1.0\linewidth]{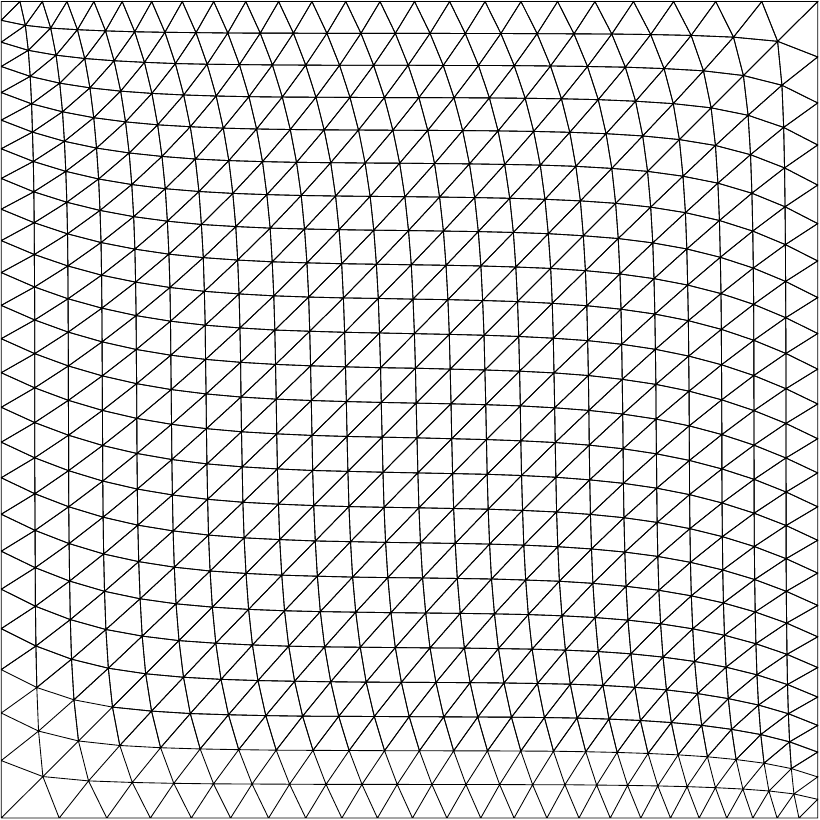}%
      \caption{smoothed mesh at $t=1.0$}\label{fig:2d:smoothing:10}
   \end{subfigure}%
   \hfill%
   \begin{subfigure}[t]{0.31\linewidth}\centering{}
      \includegraphics[clip, width=1.0\linewidth]{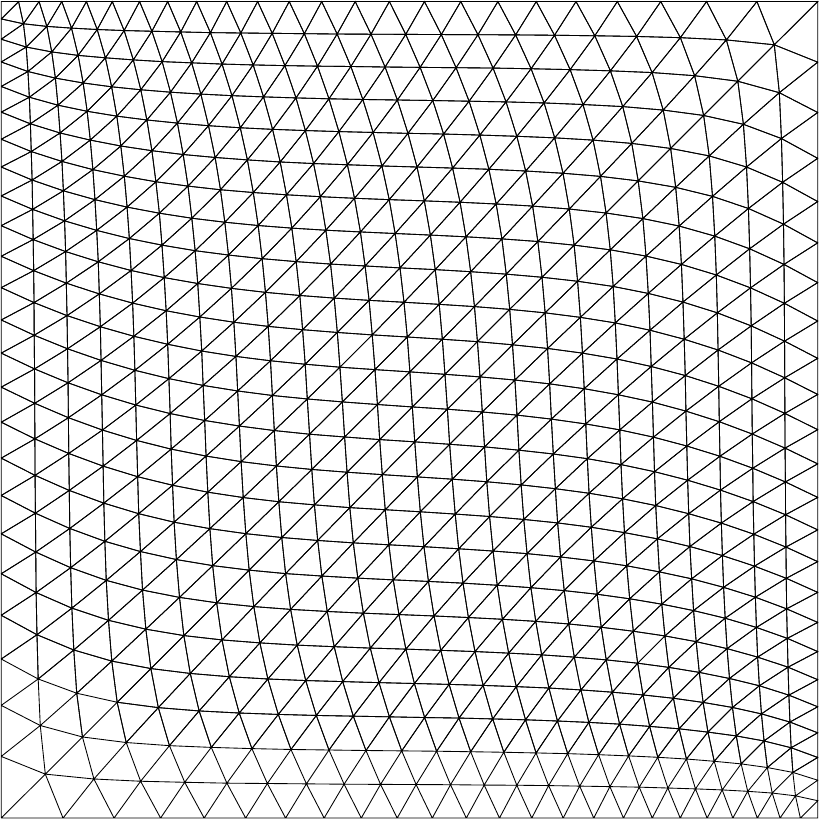}%
      \caption{smoothed mesh at $t=3.0$}\label{fig:2d:smoothing:30}
   \end{subfigure}%
   \\
   \begin{subfigure}[t]{0.40\linewidth}\centering{}
      \begin{tikzpicture}
         \begin{axis}[xlabel={$t$}]
            \addplot[color=blue, solid, mark=\IhMark, line width=1.0pt]
            table [x index=0, y index=1, col sep = space, ]
                  {2d-smoothing.txt};
            \addlegendentry{$I_h$}
         \end{axis}
      \end{tikzpicture}%
      \caption{$I_h$ as function of $t$}
   \end{subfigure}%
   \qquad%
   \begin{subfigure}[t]{0.40\linewidth}\centering{}
      \begin{tikzpicture}
         \begin{axis}[xlabel={$t$}]
            \addplot[color=blue, solid, mark=\KminMark, line width=1.0pt]
               table [x index=0, y index=2, col sep = space]
                  {2d-smoothing.txt};
            \addlegendentry{$\Abs{K}_{\min}$}
         \end{axis}
      \end{tikzpicture}%
      \caption{$\Abs{K}_{\min}$ as function of $t$}
   \end{subfigure}%
   \caption{smoothing of a distorted 2D mesh (\cref{ex:2d:smoothing})}\label{fig:2d:smoothing}%
\end{figure}

\end{example}

\begin{example}[2D mesh adaptation for the sine wave]
\label{ex:sine:wave}
In this example, the metric tensor $\M$ is based on optimizing the piecewise linear interpolation error measured in the the $L^2$-norm~\cite{Hua05a,HuaSun03},
\begin{equation}
   \M = {\det \left(\alpha I +  \Abs{H(u)} \right)}^{- \frac{1}{6}}
   \left [ \alpha I  +  \Abs{H(u)} \right ] ,
   \label{M-2}
\end{equation}
where $H(u)$ is the recovered Hessian of $u$, $|H(u)|$ is the eigen-decomposition of $H(u)$ with the eigenvalues being replaced by their absolute values, and the regularization parameter $\alpha > 0$ is chosen such that
\begin{equation}
   \int_\Omega \sqrt{\det(\M)} d \V{x}
      = 2 \int_\Omega {\det\left( |H(u)|\right)}^{\frac{1}{3}} \dx
      .
   \label{alpha-1}
\end{equation}
We choose $\Omega = (0,1) \times (0,1)$ and
\[
   u(x, y) 
      = \tanh\left(-20 \left[ y - 0.5 - 0.25 \sin \left(2 \pi  x\right) \right] \right)
   .
\]
\Cref{fig:sine:wave:0,fig:sine:wave:1,fig:sine:wave:3} show the adaptive mesh at $t=0$, $1.0$, and $3.0$ for a $44\times44$ mesh.
As expected, the functional energy is monotonically decreasing (\cref{fig:sine:wave:Ih})
and $\Abs{K}_{\min}$ stays bounded from below (\cref{fig:sine:wave:Kmin}).
Moreover, for a sequence of grids with $N \rightarrow\infty$, it seems that $\Abs{K}_{\min} \sim N^{-1}$ (\cref{fig:sine:wave:KminVsN}), which is in consistent with \cref{aK-6}, which reads as $|K| \ge C \overline{m}^{-3} N^{-3}$ for this example.

\begin{figure}[p]\centering{}
   \begin{subfigure}[t]{0.31\linewidth}\centering{}
      \includegraphics[clip, width=1.0\linewidth]{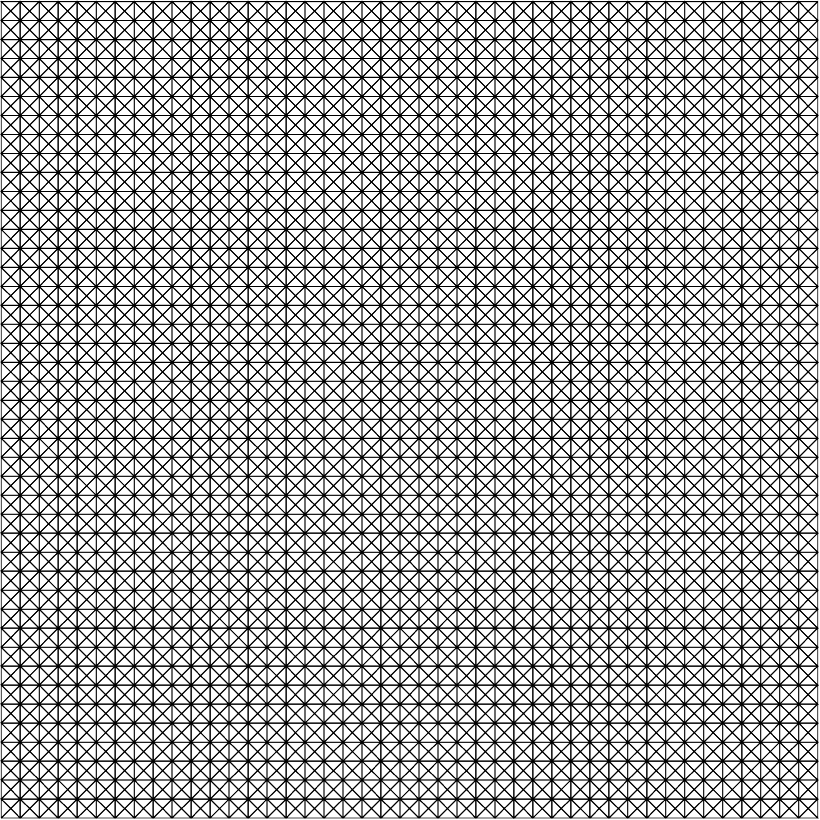}%
      \caption{initial mesh at $t=0.0$}\label{fig:sine:wave:0}
   \end{subfigure}%
   \hfill%
   \begin{subfigure}[t]{0.31\linewidth}\centering{}
      \includegraphics[clip, width=1.0\linewidth]{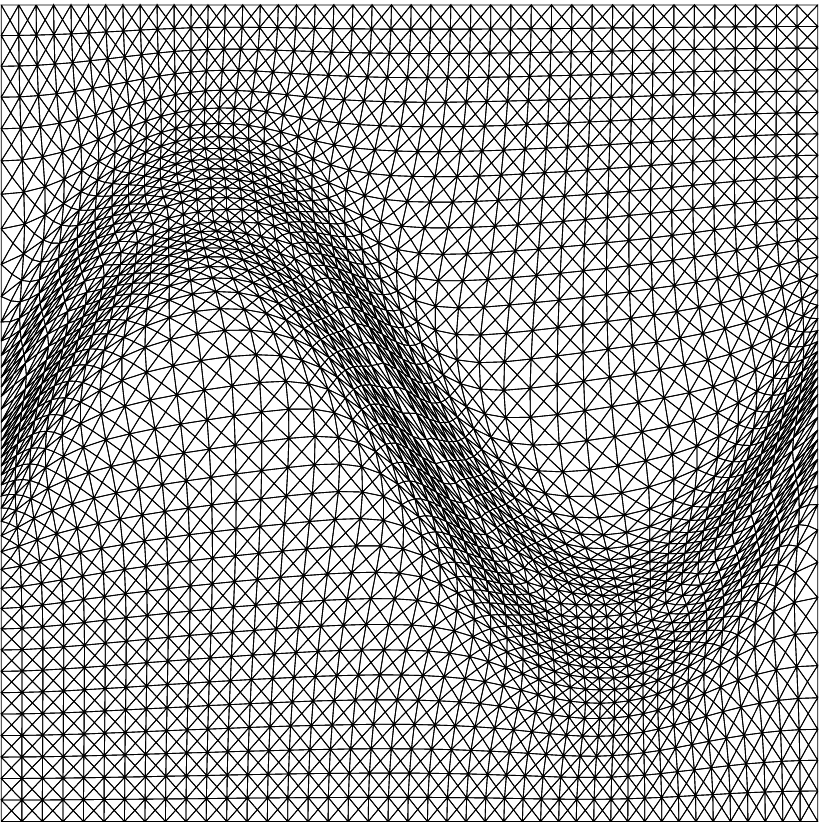}%
      \caption{adaptive mesh at $t=1.0$}\label{fig:sine:wave:1}
   \end{subfigure}%
   \hfill%
   \begin{subfigure}[t]{0.31\linewidth}\centering{}
      \includegraphics[clip, width=1.0\linewidth]{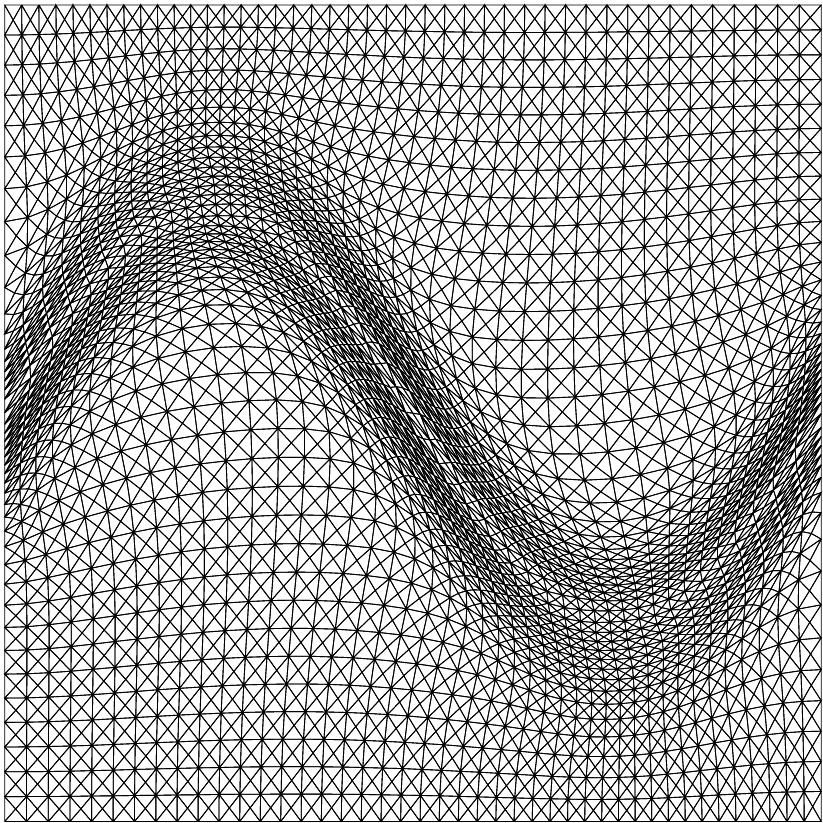}%
      \caption{adaptive mesh at $t=3.0$}\label{fig:sine:wave:3}
   \end{subfigure}%
   \\ 
   \begin{subfigure}[t]{0.31\linewidth}
      \begin{tikzpicture}\centering{}
         \begin{axis}[xlabel={$t$}]
            \addplot[color=blue, solid, mark=\IhMark, line width=1.0pt]
            table [x index=0, y index=1, col sep = space, ]
                  {2d-data.txt};
            \addlegendentry{$I_h$}
         \end{axis}
      \end{tikzpicture}%
      \caption{$I_h$ as function of $t$}\label{fig:sine:wave:Ih}
   \end{subfigure}%
   \hfill%
   \begin{subfigure}[t]{0.31\linewidth}\centering{}
      \begin{tikzpicture}
         \begin{axis}[xlabel={$t$}]
            \addplot[color=blue, solid, mark=\KminMark, line width=1.0pt]
               table [x index=0, y index=2, col sep = space]
                  {2d-data.txt};
            \addlegendentry{$\Abs{K}_{\min}$}
         \end{axis}
      \end{tikzpicture}%
      \caption{$\Abs{K}_{\min}$ as function of $t$}\label{fig:sine:wave:Kmin}
   \end{subfigure}%
   \hfill%
   \begin{subfigure}[t]{0.31\linewidth}\centering{}
      \begin{tikzpicture}
         \begin{loglogaxis}[xlabel={$N$}, ymin = 1.0e-07, legend columns=1,
            legend style={at={(0.02,-0.05)}, anchor=south west},
            xmax = 1.9e+05]
            \addplot[color=blue, solid, mark=\KminConvMark, line width=1.0pt,]
               table [x index=0, y index=1, col sep = space,]
                  {2d-eq35.txt};
            \addlegendentry{$\Abs{K}_{\min}$}
            \addplot[color=black, dashed, mark=none, line width=1.0pt,
               table/row sep=\\,]
               table {1.0e+02 1.0e-02\\ 1.0e+05 1.0e-05\\};
            \addlegendentry{$N^{-1}$}
            \addplot[color=black, dotted, mark=none, line width=1.0pt,
               table/row sep=\\,]
               table {1.0e+02 4.0e-03\\ 1.0e+04 4.0e-07\\};
            \addlegendentry{$\mathcal{O}(N^{-2})$}
         \end{loglogaxis}
      \end{tikzpicture}%
      \caption{$\Abs{K}_{\min}$ as $N\rightarrow\infty$}\label{fig:sine:wave:KminVsN}
   \end{subfigure}%
   \caption{2D mesh adaptation for the sine wave (\cref{ex:sine:wave})}\label{fig:sine:wave}%
\end{figure}

\end{example}

\begin{example}[3D smoothing, cami1a]
\label{ex:cami1a}
This example demonstrates smoothing  of a tetrahedral mesh generated by \emph{TetGen}~\cite{Si15} for the \emph{cami1a} geometry (\cref{fig:cami1a:part}).
For this example too, the functional is monotonically decreasing (\cref{fig:cam:ih}) and $\Abs{K}_{\min}$ stays bounded from below (\cref{fig:cam:kmin}).
The dihedral angle statistics of the original \emph{TetGen} mesh with
those after mesh smoothing (\cref{tab:cami1a}) shows that smoothing significantly reduces the number of small (\ang{0}--\ang{20}) and large (\ang{150}--\ang{180}) dihedral angles and, thus, produce a more uniform mesh.

\begin{figure}[t]\centering{}
   \begin{subfigure}[t]{0.26\linewidth}\centering{}
      \includegraphics[clip, width=1.0\linewidth]{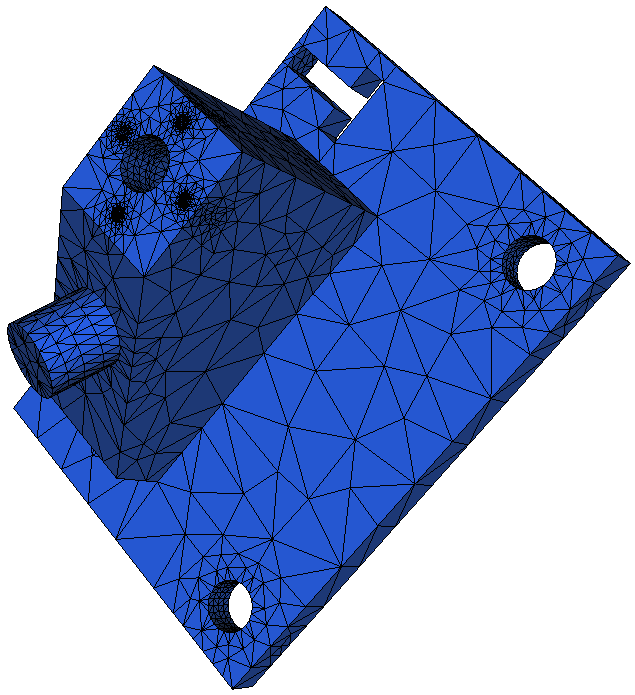}%
      \caption{mesh geometry}\label{fig:cami1a:part}
   \end{subfigure}%
   \hfill{}%
   \begin{subfigure}[t]{0.35\linewidth}
      \begin{tikzpicture}
         \begin{axis}[xlabel={$t$}]
            \addplot[color=blue, solid, mark=\IhMark, line width=1.0pt]
            table [x index=0, y index=1, col sep = space, ]
                  {3d-smoothing.txt};
            \addlegendentry{$I_h$}
         \end{axis}
      \end{tikzpicture}%
      \caption{$I_h$ as function of $t$\label{fig:cam:ih}}
   \end{subfigure}%
   \hfill{}%
   \begin{subfigure}[t]{0.35\linewidth}\centering{}
      \begin{tikzpicture}
         \begin{axis}[xlabel={$t$}]
            \addplot[color=blue, solid, mark=\KminMark, line width=1.0pt]
               table [x index=0, y index=2, col sep = space]
                  {3d-smoothing.txt};
            \addlegendentry{$\Abs{K}_{\min}$}
         \end{axis}
      \end{tikzpicture}%
      \caption{$\Abs{K}_{\min}$ as function of $t$\label{fig:cam:kmin}}%
   \end{subfigure}%
   \\
   \begin{subtable}[t]{1.0\linewidth}\centering{}%
      \vspace{0.5ex}
      \caption{statistics of dihedral angles
         before and after smoothing}\label{tab:cami1a}
         \vspace{1ex}
      \begin{tabular}[b]{@{} crr|crr @{}}
         \toprule%
         angle & before & after & angle & before  & after \\
         \midrule%
            \input{cami1a-dihedral.txt}
         \bottomrule%
      \end{tabular}%
   \end{subtable}
   \caption{Smoothing of a 3D cami1a mesh with \num{18980} elements (\cref{ex:cami1a})}
\end{figure}
\end{example}

\begin{example}[3D mesh adaptation for nine spheres]
\label{ex:nine:spheres}
In this example we choose $\Omega = (-1,1) \times (-1,1) \times (-1,1)$
and $\M$ from \cref{M-2} to minimize the $L^2$ interpolation error bound for
\begin{align*}
   u(x, y, z) &=
      \tanh\left(30\left[{(x-0.0)}^2 + {(y-0.0)}^2 + {(z-0.0)}^2 - 0.1875\right]\right)\\
   &+ \tanh\left(30\left[{(x-0.5)}^2 + {(y-0.5)}^2 + {(z-0.5)}^2 - 0.1875\right]\right)\\
   &+ \tanh\left(30\left[{(x-0.5)}^2 + {(y+0.5)}^2 + {(z-0.5)}^2 - 0.1875\right]\right)\\
   &+ \tanh\left(30\left[{(x+0.5)}^2 + {(y-0.5)}^2 + {(z-0.5)}^2 - 0.1875\right]\right)\\
   &+ \tanh\left(30\left[{(x+0.5)}^2 + {(y+0.5)}^2 + {(z-0.5)}^2 - 0.1875\right]\right)\\
   &+ \tanh\left(30\left[{(x-0.5)}^2 + {(y-0.5)}^2 + {(z+0.5)}^2 - 0.1875\right]\right)\\
   &+ \tanh\left(30\left[{(x-0.5)}^2 + {(y+0.5)}^2 + {(z+0.5)}^2 - 0.1875\right]\right)\\
   &+ \tanh\left(30\left[{(x+0.5)}^2 + {(y-0.5)}^2 + {(z+0.5)}^2 - 0.1875\right]\right)\\
   &+ \tanh\left(30\left[{(x+0.5)}^2 + {(y+0.5)}^2 + {(z+0.5)}^2 - 0.1875\right]\right)
   .
\end{align*}

\cref{fig:nine:spheres} shows an example of the final adaptive mesh and plots of the functional value
and $\Abs{K}_{\min}$ with respect to the time.
As expected, the functional value is monotonically decreasing.  $\Abs{K}_{\min}$ is decreasing with time as well but one observes that it is bounded from below. 

\begin{figure}[t]\centering{}
      \begin{subfigure}[t]{0.28\linewidth}\centering{}
      \includegraphics[clip, width=1.0\linewidth]{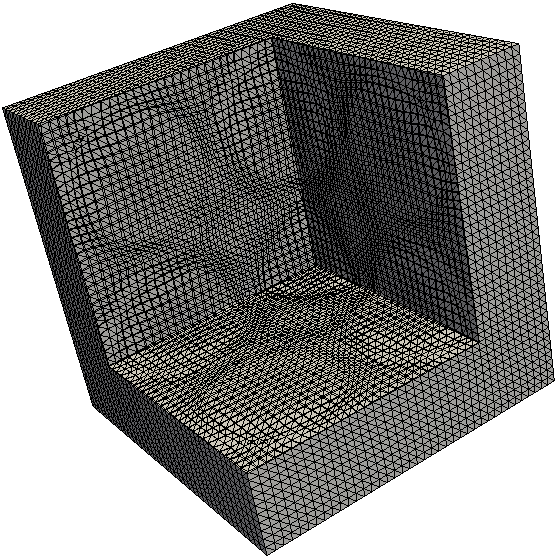}%
      \caption{mesh clip}
   \end{subfigure}%
   \hfill{}%
   \begin{subfigure}[t]{0.35\linewidth}\centering{}
      \begin{tikzpicture}
         \begin{axis}[xlabel={$t$}]
            \addplot[color=blue, solid, mark=\IhMark, line width=1.0pt]
            table [x index=0, y index=1, col sep = space, ]
                  {3d-data.txt};
            \addlegendentry{$I_h$}
         \end{axis}
      \end{tikzpicture}%
      \caption{$I_h$ as function of $t$\label{fig:nine:ih}}
   \end{subfigure}%
   \hfill{}%
   \begin{subfigure}[t]{0.35\linewidth}\centering{}
      \begin{tikzpicture}
         \begin{axis}[xlabel={$t$}]
            \addplot[color=blue, solid, mark=\KminMark, line width=1.0pt]
               table [x index=0, y index=2, col sep = space]
                  {3d-data.txt};
            \addlegendentry{$\Abs{K}_{\min}$}
         \end{axis}
      \end{tikzpicture}%
      \caption{$\Abs{K}_{\min}$ as function of $t$\label{fig:nine:kmin}}
   \end{subfigure}%
   \caption{%
      \cref{ex:nine:spheres} (3D mesh adaptation for nine spheres)
   }\label{fig:nine:spheres}%
\end{figure}
\end{example}

\section{Conclusions}
\label{SEC:conclusion}

The geometric discretization of meshing functionals recently introduced in~\cite{HuaKam15a} can be formulated as a modified gradient system of the corresponding discrete functionals for the location of mesh vertices.

For the semi-discrete system \cref{mmpde-3} and meshing functionals satisfying the coercivity condition \cref{coercive-1} with $q > d/2$ (such as Huang's functional \cref{huang} with $p > 1$), the value of the meshing functional is always convergent and the mesh trajectory has nonsingular limiting meshes.

In particular, \cref{thm-ns-1} shows that the mesh stays nonsingular for $t>0$ if it is nonsingular initially: the altitudes and the volumes of mesh elements stay bounded below by positive numbers depending only on the number of elements, the metric tensor, and the initial mesh, cf.~\cref{aK-3,aK-4}.
Moreover, \cref{thm-lim-1} shows that all limiting meshes are critical points of the discrete functional and satisfy \cref{aK-3,aK-4}.
The convergence of the mesh trajectory can be guaranteed if a stronger condition is placed on the meshing functional.

\Cref{thm-ns-2,thm-lim-2} show that the above-mentioned properties also hold for the fully discrete systems of MMPDE \cref{mmpde-3} provided that the time step is sufficiently small and the underlying integration scheme satisfies the property of monotonically decreasing energy.
For example, Euler, backward Euler, and algebraically stable Runge-Kutta schemes satisfy this property under a mild restriction on the time step.

We would like to point out that the results of the current work cannot be applied directly to non-simplicial meshes.
Nevertheless, a polygonal/polyhedral mesh can first be triangulated into a simplicial mesh (for which the current results can be applied) and then the updated position of the vertices of the original mesh can be obtained through the simplicial mesh.
The interested reader is referred to~\cite{HuaWan} for the application of this idea to polygonal meshes.

\vspace{2ex}
\textbf{Acknowledgments.}
The authors would like to thank the anonymous referee for the valuable comments in improving the quality of the paper.

L.~K.\ is thankful to School of Mathematical Sciences of Xiamen University for the hospitality during his research visit in 2015 to Xiamen, where part of this work has been carried out.

\bibliographystyle{plain}

\end{document}